\theoremstyle{definition}
\newtheorem{defin}{Definition}[section]
\theoremstyle{remark}
\newtheorem{example}[defin]{Example}
\newtheorem{remar}[defin]{Remark}
\theoremstyle{plain}
\newtheorem{thm}[defin]{Theorem}
\newtheorem{prop}[defin]{Proposition}
\newtheorem{lemm}[defin]{Lemma}
\def\ov{\overline}
\newcommand{\inte}{\mathsf{int}}
\newcommand{\sq}{sequence}
\newcommand{\z}{\mathbb Z}
\newcommand{\na}{\mathbb N}
\newcommand{\tl}{topological}
\newcommand{\xt}{$(X,T)$}
\newcommand{\diam}{\mathsf{diam}}
\newcommand{\Per}{\mathsf{Per}}
\numberwithin{equation}{section}
\begin{document}

\title{Decisive Bratteli-Vershik models}

\date{}

\author{T. Downarowicz}
\address{Faculty of Mathematics and	Faculty of Fundamental Problems of Technology, Wroclaw University of Technology, Wroclaw, Poland}
\email{Tomasz.Downarowicz@pwr.edu.pl}

\author{O. Karpel}
\address{B. Verkin Institute for Low Temperature Physics and Engineering, Kharkiv, Ukraine\\
{\em Current address: Department of Dynamical Systems, Institute of Mathematics of Polish Academy of Sciences, Wroclaw, Poland}}
\email{helen.karpel@gmail.com}

\begin{abstract}
In this paper we focus on Bratteli-Vershik models of general compact zero-dimensional systems with the action of a homeomorphism. An ordered Bratteli diagram is called decisive if the corresponding Vershik map prolongs in a unique way to a homeomorphism of the whole path space of the Bratteli diagram. We prove that a compact invertible zero-dimensional system has a decisive Bratteli-Vershik model if and only if the set of aperiodic points is either dense, or its closure misses one periodic orbit.
\end{abstract}

\maketitle

\section{Introduction}
Bratteli-Vershik representations (BV-models for short) have been used to study mainly minimal Cantor systems, where they showed extremely useful as a tool allowing to describe the simplex of invariant measures and orbit equivalence classes (see e.g.~\cite{durand:2010}). It was proved in~\cite{herman_putnam_skau:1992} that every such system $(X,T)$ has a BV-model which determines it,
in the sense that the Vershik map is defined but at one point and prolongs continuously to the whole path-space, producing a system conjugate to $(X,T)$. Clearly, in such case the prolongation is \emph{unique}.

But the applicability of ordered Bratteli diagrams (with the action of the Vershik map) reaches much further. They may represent non-minimal Cantor dynamics (see \cite{downarowicz_survey}, \cite{BKM09},\cite{bezuglyi_karpel}), as well as Borel \cite{BDK06} and measurable dynamics (see \cite{vershik:1981}, \cite{vershik:1982}). By nature, they always produce zero-dimensional models, not necessarily minimal and not necessarily Cantor (i.e., they admit isolated points) and the Vershik map is not always uniformly continuous (see \cite{downarowicz_survey}). In this paper, we focus on Bratteli-Vershik models of general compact zero-dimensional systems with the action of a homeomorphism. There are several works devoted to establishing a class of such systems which fit the Bratteli-Vershik scheme best. However, a problem arises, which was not considered by other authors (see for instance \cite{medynets:2006}, \cite{shimomura:2016}): without assuming minimality (or at least \emph{essential minimality}, i.e., containing a unique minimal subset), the domain of the Vershik map usually misses a larger subset of the path-space and it is not clear how the map should be prolonged. Many authors so far were satisfied when a zero-dimensional system $(X,T)$ had a BV-model such that the Vershik map \emph{admitted}
a continuous prolongation conjugate to $(X,T)$. But a priori the Vershik map could also be prolonged to a different continuous map, producing a system not conjugate to $(X,T)$. In other words,
the same BV-model could serve for several mutually non-conjugate zero-dimensional systems.

The simplest example of this phenomenon is the Bratteli diagram in form of the binary tree. The path-space represents the Cantor set, but since every path is both maximal and minimal, the Vershik map is undefined everywhere and any continuous action ``fits'' to this model. What we require from a BV-model is exactly the opposite---it should \emph{determine} the system $(X,T)$ uniquely.

To make our point completely clear, we now give, after \cite{downarowicz_survey}, the definition of the notion which we consider crucial. All necessary terms will be provided in Preliminaries. For now it suffices to remember that a BV-model consists of a Bratteli diagram $B$ equipped with a partial order $<$ which induces a partly defined Vershik map $T_V$ on the path-space $X_B$.

\begin{defin}
We say that an ordered Bratteli diagram $(B,<)$ is \emph{decisive} if the Vershik map $T_V$ prolongs in \emph{a unique way} to a homeomorphism $\ov T_V$ of $X_B$. A zero-dimensional dynamical system $(X,T)$ will be called \emph{Bratteli-Vershikizable} if it is conjugate to $(X_B, \ov T_V)$ for a decisive ordered Bratteli diagram $(B,<)$.
\end{defin}
Roughly speaking a system is Bratteli-Vershikizable if it admits a BV-model which carries \emph{all the information} about the system. Clearly, all minimal Cantor systems are Bratteli-Vershikizable (they can be thought of as a prototype for this property).

The following proposition gives a criterion for decisiveness of an ordered Bratteli diagram (easy arguments for necessity and sufficiency are given in \cite{downarowicz_survey}).

\begin{prop}\label{decis_krit}

An ordered Bratteli diagram $(B,<)$ is decisive if and only if the following two conditions hold:

\begin{enumerate}[(A)]
	\item the Vershik map and its inverse are uniformly continuous on their domains, and
	\item the domains of the Vershik map and its inverse are either both dense in $X_B$ or their closures
	both miss one point (not necessarily the same).
\end{enumerate}
\end{prop}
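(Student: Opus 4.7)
My plan is to prove the two implications separately, using a common setup. Granted (A), uniform continuity of $T_V$ on its dense domain in $D_1 := \overline{\mathrm{dom}(T_V)}$ yields a unique continuous extension $\widetilde T : D_1 \to X_B$; analogously we get a unique continuous extension $\widetilde S : D_2 \to X_B$ of $T_V^{-1}$, where $D_2 := \overline{\mathrm{dom}(T_V^{-1})}$. Write $U_i := X_B \setminus D_i$.

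For sufficiency, assume (A) and (B). I would first check that $\widetilde T$ is a homeomorphism of $D_1$ onto $D_2$: by continuity $\widetilde T(D_1) \subseteq \overline{T_V(\mathrm{dom}(T_V))} = D_2$, and $\widetilde S \circ \widetilde T : D_1 \to D_1$ is continuous and agrees with the identity on the dense set $\mathrm{dom}(T_V)$, hence equals the identity on $D_1$; the symmetric statement for $\widetilde T \circ \widetilde S$ completes the verification. In case (B)(i), $D_1 = D_2 = X_B$, so $\overline T_V := \widetilde T$ is the required homeomorphism, and any homeomorphism extension of $T_V$ must coincide with $\widetilde T$ on the dense set $\mathrm{dom}(T_V)$ and therefore everywhere. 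In case (B)(ii), write $U_1 = \{x\}$ and $U_2 = \{y\}$, set $\overline T_V(x) := y$, and verify continuity at $x$: for any $x_n \to x$ in $D_1$, a subsequential limit $z$ of $\widetilde T(x_n)$ must equal $y$, since otherwise $z \in D_2$ and continuity of $\widetilde S$ would give $\widetilde S(z) = \lim x_n = x$, contradicting $x \in U_1$. The resulting continuous bijection on the compact Hausdorff space $X_B$ is then a homeomorphism, and it is the only possible one, because any extension is forced on $D_1$ by density and at $x$ by bijectivity.

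For necessity, suppose $(B,<)$ is decisive with homeomorphism extension $\overline T_V$. Condition (A) is immediate: a self-homeomorphism of a compact metric space and its inverse are both uniformly continuous, and the restrictions to the respective domains of $T_V$ and $T_V^{-1}$ remain so. For (B), I would first derive $|U_1| \le 1$ by contradiction. If $|U_1| \ge 2$, then since $U_1$ is open in the zero-dimensional $X_B$, one can pick two disjoint nonempty clopen sets $V_1, V_2 \subseteq U_1$. The clopen set $V := V_1 \sqcup V_2$ has at least two points, and a brief case split according to whether $V$ has two isolated points (swap them) or contains a clopen Cantor subset (use any non-identity self-homeomorphism of that subset, extended by the identity) provides a non-identity self-homeomorphism $\sigma$ of $V$. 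Extending $\sigma$ by the identity outside $V$ yields a homeomorphism $\tau$ of $X_B$ equal to the identity on $\mathrm{dom}(T_V) \subseteq X_B \setminus U_1$; then $\overline T_V \circ \tau$ is a second homeomorphism extension of $T_V$ distinct from $\overline T_V$, contradicting decisiveness. The same argument applied to $\overline T_V^{-1}$ gives $|U_2| \le 1$. Finally, since $\overline T_V$ is a closed map, $\overline T_V(D_1) = \overline{T_V(\mathrm{dom}(T_V))} = D_2$, so $D_1 = X_B$ iff $D_2 = X_B$; this forces $|U_1|$ and $|U_2|$ to be simultaneously $0$ or simultaneously $1$, which is precisely (B).

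I expect the only mildly delicate step to be the production of the non-identity self-homeomorphism $\sigma$ in the necessity direction; while entirely elementary, it needs the quick case split between $V$ being scattered (with isolated points available to swap) and $V$ containing a perfect Cantor piece. Everything else reduces to routine manipulation of uniform continuity, density of the domain of $T_V$ in $D_1$, and the closed-map property of a homeomorphism on a compact space.
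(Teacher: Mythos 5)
Your proof is correct. A caveat about the comparison you were asked for: the paper does not actually prove Proposition~\ref{decis_krit} --- it only records the statement and refers to the survey \cite{downarowicz_survey} for the ``easy arguments for necessity and sufficiency'' --- so there is no in-text proof to measure yours against; your argument is the natural one and, as far as this paper is concerned, fills a deliberate gap. Two small observations. First, in case (B)(ii) of sufficiency your continuity check at $x$ is actually vacuous: $U_1=X_B\setminus\overline{\mathrm{dom}(T_V)}=\mathsf{int}(X_{\max})$ is open, so if it is a singleton $\{x\}$ then $x$ is an isolated point of $X_B$ and no sequence from $D_1$ converges to it; the same applies to $y$. (Your subsequence argument is still valid, just not needed.) Second, the step you rightly flag as delicate does go through, but the clean dichotomy is by the number of isolated points of the clopen set $V$: if $V$ has at least two isolated points, swap them; if it has at most one, then removing it leaves a nonempty clopen perfect zero-dimensional compact metric subspace, which by Brouwer's theorem is a Cantor set and admits non-identity self-homeomorphisms that extend by the identity. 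This covers all cases (e.g.\ $V$ a convergent sequence with its limit falls only under the first branch, a Cantor set with one isolated point attached only under the second), so the construction of the competing extension $\overline T_V\circ\tau$, and hence the bound $|U_1|\le 1$, is secure; the rest of your necessity argument (the symmetric bound on $U_2$ and the equivalence $D_1=X_B\Leftrightarrow D_2=X_B$ via $\overline T_V(D_1)=D_2$) is routine and correct.
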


Since uniform continuity is automatic in BV-models of continuous dynamical systems, it is the density of the domain of the Vershik map that becomes the crucial ingredient. Based on the approach in~\cite{medynets:2006} one might have the impression that this density is implied by the condition that the heights of the Kakutani-Rokhlin towers, associated with the diagram, grow to infinity. Medynets shows that such BV-models can be obtained for any aperiodic zero-dimensional system. While the last result is an almost immediate consequence of the Krieger's Marker Lemma, it is in fact insufficient for density of the domain of the Vershik map. An appropriate example is provided in Section \ref{examples}. Nonetheless, it is true that every aperiodic zero-dimensional system admits a decisive BV-model. This fact is proved in our recent paper \cite{downarowicz_survey}, but it requires a little more work than just a straightforward application of the marker lemma.

But aperiodicity is not necessary for the existence of a decisive BV-model and the goal of this paper is
to give full characterization of the class of Bratteli-Vershikizable systems. Moreover, we also examine the notion of weak decisiveness (when the prolongations of the Vershik maps are numerous, but all yield mutually conjugate systems). We show that systems admitting weakly decisive BV-models also admit decisive BV-models. Finally, in Section \ref{fullshift}, we describe a decisive BV-model of the full shift on two symbols, which turns out surprisingly irregular.

\medskip
Let us also mention that decisive ordered Bratteli diagrams have been in fact studied, for instance in~\cite{BKY14, BY}, but from a different perspective (and without using the notion of decisiveness). The authors start with a fixed Bratteli diagram $B$ and search for orderings $<$ such that the corresponding Vershik map can be uniquely prolonged to a homeomorphism of the whole path-space. Such orderings are called perfect (in our notation, these are precisely the decisive orderings). In our approach the starting point is always the abstract system $(X,T)$ and then we search for both the diagram and a perfect ordering on it.

\section{Preliminaries}
We begin with some elementary general facts concerning zero-dimensional systems.

\begin{defin}
Let $\Lambda_1,\Lambda_2,\dots$ be finite alphabets (the cardinalities need not be bounded).
By an \emph{array system} we mean any closed, shift-invariant subset of the Cartesian product
$\prod_k \Lambda_k^\z$. Each element of the array system can be pictured as an array $x=[x_{k,n}]_{k\in\na,n\in\z}$, such that each $x_{k,n}$ belongs to $\Lambda_k$.
Speaking about an array we will refer to the indices $k$ and $n$ as \emph{vertical} and
\emph{horizontal} coordinates (positions), respectively. On this space we consider the action of the horizontal shift $Tx =[x_{k,n+1}]_{k\in\na,n\in\z}$.
\end{defin}

Clearly, every array system is zero-dimensional. The converse also holds, which is an elementary fact
(see e.g. \cite{downarowicz_survey}):


\begin{thm}
Every zero-dimensional system \xt\ is conjugate to an array system.
\end{thm}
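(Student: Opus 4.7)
The plan is to encode each point $x\in X$ by recording, along its whole $T$-orbit, which cell of a refining sequence of clopen partitions it visits. The first step is to exploit zero-dimensionality of the compact metric space $X$ to choose a sequence of finite clopen partitions $\p_1,\p_2,\dots$ of $X$ such that each $\p_{k+1}$ refines $\p_k$ and the union $\bigcup_k\p_k$ generates the topology (equivalently, $\mathrm{mesh}(\p_k)\to 0$). This is standard: pick a countable clopen basis and successively close it under finite Boolean operations to get finer and finer partitions.

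Next, for each $k$ let $\Lambda_k$ be a finite alphabet naming the cells of $\p_k$, and define
\[
\phi\colon X\to\prod_{k\in\na}\Lambda_k^{\z},\qquad \phi(x)_{k,n}=\text{the name in }\Lambda_k\text{ of the cell of }\p_k\text{ containing }T^n x.
\]
Each coordinate map $x\mapsto\phi(x)_{k,n}$ is continuous because $\p_k$ is a finite clopen partition and $T^n$ is a homeomorphism, so $\phi$ is continuous into the product topology. The shift-equivariance $\phi\circ T= T\circ\phi$ (where $T$ on the right is the horizontal shift) is immediate from the definition, since shifting the column index $n$ by one corresponds to replacing $T^nx$ by $T^{n+1}x$.

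Then I would verify injectivity: if $\phi(x)=\phi(y)$, then in particular the column $n=0$ shows that $x$ and $y$ lie in the same cell of every $\p_k$, and since $\bigcup_k\p_k$ generates the topology this forces $x=y$. Since $X$ is compact and $\phi$ is a continuous injection into a Hausdorff space, $\phi$ is a homeomorphism onto its image $Y:=\phi(X)$. The set $Y$ is compact, hence closed in the product, and shift-invariant by equivariance, so it is an array system in the sense of the definition. Finally, $\phi$ conjugates $(X,T)$ with $(Y,T|_Y)$, completing the proof.

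The only real obstacle is the preliminary step of producing the refining sequence of finite clopen partitions that separates points; everything after that is bookkeeping. Once those partitions are in hand, continuity, injectivity, and equivariance of the coding $\phi$ are all built in by construction.
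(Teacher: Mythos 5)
Your proof is correct and is precisely the standard partition--itinerary coding argument that the paper alludes to (it states this as an elementary fact and defers to the cited survey rather than giving a proof). All the steps---existence of a refining generating sequence of finite clopen partitions, continuity and equivariance of the coding map, injectivity from the generating property, and compactness to upgrade a continuous bijection onto the image to a conjugacy---are complete and in the expected order.
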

\medskip

We now recall briefly the notion of an ordered Bratteli diagram and the associated Vershik map. For more details see \cite{herman_putnam_skau:1992}.
\smallskip

A Bratteli diagram is a graph $B=(V,E)$ whose set of vertices $V$ is organized into countably many disjoint finite subsets $V_0$, $V_1$, \dots\ called {\it levels}. The zero level $V_0$ is a singleton $\{v_0\}$. The set of edges $E$ of the diagram is organized into countably many disjoint finite sets $E_1, E_2,\dots$. Every edge $e\in E_k$ connects a \emph{source} $s=s(e)\in V_k$ with some \emph{target} $t=t(e)\in V_{k-1}$. Each vertex is a target of at least one edge and every vertex of each level $k>0$ is also a source of at least one edge. Multiple edges connecting the same pair of vertices are admitted. By a \emph{path} we understand a finite (or infinite) \sq\ of edges $p=(e_1,e_2,\dots,e_l)$ (or $p=(e_1,e_2,\dots)$) such that $t(e_{k+1})=s(e_k)$ for every $k=1,\dots,l-1$ (or $i=1,2,\dots$). Then the target of $e_1$ will be referred to as the target of the path and (only for finite paths) the source of $e_l$ will be referred to as the source of the path.

\begin{defin}
Given a Bratteli diagram $B$, we define the \emph{path space} $X_B$ as the set of all infinite paths with target $v_0$. We endow $X_B$ with the topology inherited from the product space $\prod_k E_k$, where each $E_k$ is considered discrete. Clearly, $X_B$ is compact, metric and zero-dimensional.
\end{defin}

\begin{defin}
By an \emph{ordered Bratteli diagram} $(B,<)$ we shall mean a Bratteli diagram $B$ with a specific partial order. For each vertex $v\in V_k$, where $k>0$, all edges $e$ with $s(e)=v$ are ordered linearly
(i.e., enumerated as $\{e_1,e_2,\dots,e_{n(v)}\}$). Edges with different sources are incomparable.
\end{defin}

The above order allows one to introduce a partial order among finite and infinite paths. Two finite paths are comparable if they have a common source and the same length. For such paths we can apply the inverse lexicographical order: a path $p=(e_1,e_2,\dots,e_l)$ precedes $p'=(e'_1,e'_2,\dots,e'_l)$ if there exists
an index $1\le i\le l$ such that $e_j = e'_j$ for all $j>i$ (then $s(e_i)=s(e'_i)$) and $e_i<e'_i$ (we admit $i=l$; then the first condition is fulfilled trivially). Two infinite paths are comparable if they have targets in the same level and they are \emph{cofinal}, i.e., they agree from some place downward. In such case we apply to them the same rule as described above. It is easy to see that the relation of being cofinal for two paths is an equivalence relation.

\begin{defin} A finite or infinite path is called \emph{maximal} (\emph{minimal}) if it has no successor (predecessor).
\end{defin}
By compactness, one can show that at least one maximal and one minimal path in $X_B$ always exist. The sets of maximal paths and of minimal paths are closed and we will denote them by $X_{\max}$ and $X_{\min}$, respectively.

\begin{defin} On the path space $X_B$ of an ordered Bratteli diagram $(B,<)$, there is a natural, partially defined transformation $T_V$, called the \emph{Vershik map}. It is defined on the set of all but maximal paths and it sends every such path to its successor. The range of the map is the set of all but minimal paths. The Vershik map is a homeomorphism between its domain and range.
\end{defin}

Given a zero-dimensional system \xt\ we will say that an ordered Bratteli diagram $(B,<)$ is a \emph{model} for \xt\ if the Vershik map $T_V$ admits a continuous prolongation $\ov T_V$ to the whole path-space $X_B$ such that the systems $(X_B,\ov T_V)$ and \xt\ are \tl ly conjugate.

In view of what was already explained in the Introduction, for an ordered Bratteli diagram to be decisive, the interiors of the set of maximal paths and that of minimal paths must either both be empty or both be singletons. For a moment we will focus on the first case. It is fairly obvious that in
any BV-model of a compact dynamical system, any periodic orbit contains at least one maximal and one minimal path. Thus, in order to fulfill the condition that the sets of maximal and minimal paths have empty interiors (recall that these sets are closed, so in fact we need them to be nowhere dense), we must be at least able to find a nowhere dense selector from periodic orbits. This is managed in the elementary lemma below.

\begin{lemm}\label{periodic_cross_section}
Let $(X,T)$ be a zero-dimensional dynamical system. A nowhere dense set intersecting every periodic orbit exists if and only if the set of aperiodic points is dense in $X$.
\end{lemm}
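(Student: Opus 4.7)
The plan is to reduce both directions to a single Baire category observation. For each $n \geq 1$ the set $\Per_n = \{x : T^n x = x\}$ is closed in $X$, so $\Per := \bigcup_n \Per_n$ is an $F_\sigma$, and every nonempty open $U \subseteq \ov{\Per}$ decomposes as the countable union $U = \bigcup_n (U \cap \Per_n)$ of relatively closed subsets. Baire category then produces a nonempty open $V \subseteq U$ contained in some single $\Per_n$; since every point of $V$ has period dividing $n$, density of aperiodic points would be violated immediately. This one observation drives everything.

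For the sufficiency direction I would simply take $N := \ov{\Per}$. Since $N \supseteq \Per$, it contains every periodic point and hence meets every periodic orbit; and by the observation above, $N$ has empty interior, so $N$ is closed and nowhere dense. Conversely, assume $N$ is nowhere dense and meets every periodic orbit, but that the aperiodic points fail to be dense. Then there is a nonempty open $U \subseteq \Per$, and the Baire step again produces a nonempty open $V \subseteq \Per_n$ for some $n$. Every orbit through $V$ has at most $n$ points and must hit $N$, giving $V \subseteq \bigcup_{i=0}^{n-1} T^{-i}(N)$. Since $T$ is a homeomorphism, each $T^{-i}(N)$ is nowhere dense, so the right-hand side is nowhere dense in $X$---contradicting that $V$ is nonempty open.

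There is no real obstacle: the only conceptual point is to recognize in advance that $\ov{\Per}$ itself is the natural candidate for $N$, so no intricate orbit-by-orbit selector construction is required. Zero-dimensionality of $X$ is not actually needed at this stage; what is used is only that $X$ and its nonempty open subsets are Baire spaces, together with the fact that $T$ and its iterates are homeomorphisms.
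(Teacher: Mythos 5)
Your converse direction is correct, and is essentially a finite-union variant of the paper's argument: the paper covers $\Per(X,T)$ by $\bigcup_{n\ge 0}T^n(A)$, a first category set, and invokes Baire once to get a dense complement consisting of aperiodic points, whereas you first use Baire to locate a nonempty open $V\subseteq\Per_n$ and then derive a contradiction from $V\subseteq\bigcup_{i=0}^{n-1}T^{-i}(N)$. Both work, and zero-dimensionality is indeed not needed for that half.

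The sufficiency direction, however, has a genuine gap. Your ``single Baire category observation'' is false as stated: a nonempty open set $U\subseteq\ov{\Per}$ need \emph{not} satisfy $U=\bigcup_n(U\cap\Per_n)$; that identity holds only when $U\subseteq\Per$, and the fact that $U\cap\Per$ is dense in $U$ is not enough for Baire to produce an open set inside a single $\Per_n$ (compare $\mathbb Q=\bigcup_q\{q\}$, dense in $\mathbb R$ with every piece nowhere dense). Consequently the conclusion you draw---that density of the aperiodic points forces $\ov{\Per}$ to be nowhere dense---is simply wrong, and so is the choice $N=\ov{\Per}$. The full shift on two symbols (treated at length in Section 7 of the paper) is a counterexample: there both the periodic and the aperiodic points are dense, so $\ov{\Per}=X$ has full interior, yet the lemma asserts that a nowhere dense selector exists. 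This is exactly why the paper does not take the closure of the periodic points but instead constructs the selector as $A=\Per(X,T)\setminus\bigcup_k U_k$, where the $U_k$ are small clopen towers of heights $p_k$ around a dense sequence $\{x_k\}$ of aperiodic points with $\sum_k 1/p_k<1$; the quantitative bound guarantees that each periodic orbit of length $n$ loses at most $\sum_k n/p_k<n$ of its points and so still meets $A$, while the deleted neighbourhoods force $\ov{A}$ to miss the dense set $\{x_k\}$. Some construction of this kind---the orbit-by-orbit bookkeeping you explicitly hoped to avoid---is unavoidable here.
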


\begin{proof}
Suppose that the set of aperiodic points is dense. Then there exists a dense sequence $\{x_k\}_{k=1}^\infty$ of aperiodic points. Let $\{p_k\}_{k=1}^\infty$ be a strictly increasing sequence of positive integers such that $\sum_{k=1}^\infty \frac{1}{p_k} < 1$. Let $U_k$ be a clopen neighbourhood of $x_k$ such that the sets $U_k$, $T U_k, \ldots, T^{p_k-1}U_k$ are pairwise disjoint. Denote by $\mathsf{Per}(X,T)$ the set of all periodic points of $(X,T)$. Let $A = \mathsf{Per}(X,T) \setminus \bigcup_{k=1}^{\infty} U_k$. Then $A$ is nowhere dense (its closure misses the dense \sq\ $\{x_k\}_{k=1}^{\infty}$). Let $x$ be any periodic point and denote its minimal period by $n$. Then the set $U_k$ contains at most the fraction $\frac1{p_k}$ out of $n$ elements of the orbit of $x$. Since $\sum_{k=1}^\infty \frac{1}{p_k} < 1$, the set $A$ intersects the orbit of $x$.

Conversely, suppose that $A$ is a nowhere dense set which intersects every periodic orbit. Then $\mathsf{Per}(X,T)\subset\bigcup_{n = 0}^{\infty} T^n(A)$ which is a first category set, hence, by the Baire Theorem, its complement is dense.
\end{proof}

\section{Formulation of the main result}
The preceding lemma has led us to the condition that the set of aperiodic points is dense. It turns out that this condition is precisely all we need to claim the system to be Bratteli-Vershikizable. As a matter of fact, we may also admit in the system one isolated periodic orbit. Below is the formulation of our main result---the characterization of Bratteli-Vershikizable systems.

\begin{thm}\label{main}
A (compact, invertible) zero-dimensional system $(X,T)$ is Bratteli-Vershikizable if and only if the set of aperiodic points is dense, or its closure misses one periodic orbit.
\end{thm}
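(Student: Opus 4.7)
The plan is to treat necessity and sufficiency separately, splitting the latter according to the dichotomy in the hypothesis; the whole argument is driven by how periodic orbits must be represented as paths in a decisive Bratteli-Vershik model.

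\textbf{Necessity.} The starting observation is that every periodic orbit of $\ov T_V$ must meet $X_{\max}$: the restriction of $T_V$ to any cofinality class is a strictly order-increasing bijection, so no periodic cycle lies entirely in the domain of $T_V$, and the ``wrap-around'' of the orbit must occur at a maximal path. Given a decisive model $(B,<)$ with conjugacy $\phi\colon (X_B,\ov T_V)\to (X,T)$, Proposition \ref{decis_krit}(B) says that the closed set $X_{\max}$ is either nowhere dense or consists of a nowhere dense part together with one isolated point $x_0$. In the first case, $\phi(X_{\max})\subseteq X$ is nowhere dense and meets every periodic orbit, and Lemma \ref{periodic_cross_section} gives density of the aperiodic set. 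In the second, the $\ov T_V$-orbit of $x_0$ is an isolated periodic orbit $O\subseteq X_B$; decisiveness forces $X_{\max}\setminus\{x_0\}$ to be closed and nowhere dense, and every other periodic orbit still meets it, so the same lemma applied to the clopen invariant set $X\setminus\phi(O)$ shows that the closure of the aperiodic points in $X$ misses exactly $\phi(O)$.

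\textbf{Sufficiency, aperiodic dense.} I would construct inductively a nested sequence of clopen Kakutani-Rokhlin partitions $\mathcal{P}_n$ whose bases $B_n$ satisfy: (i) $B_n\supseteq A_0$, where $A_0$ is the nowhere dense selector produced in the proof of Lemma \ref{periodic_cross_section}; (ii) $B_{n+1}\subseteq B_n$ and $\bigcap_n B_n$ is nowhere dense; (iii) tower heights on the aperiodic part tend to infinity; and (iv) the $\mathcal{P}_n$ generate the topology of $X$. With the clopen sets $U_k$ from that proof in hand, one chooses at step $n$ a base $B_n\subseteq X\setminus(U_1\cup\cdots\cup U_n)$, which forces $\bigcap_n B_n\subseteq X\setminus\bigcup_k U_k$ (closed and nowhere dense) while still preserving $A_0\subseteq B_n$. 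The associated Bratteli diagram has $X_{\max}$ corresponding under the resulting conjugacy to $T^{-1}(\bigcap_n B_n)$, hence nowhere dense; symmetrically for $X_{\min}$. Since uniform continuity of $T_V$ and its inverse is automatic in a BV-model of a continuous system, Proposition \ref{decis_krit} then yields decisiveness.

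\textbf{Sufficiency, one isolated orbit.} If the closure of the aperiodic set misses a periodic orbit $O$ of period $p$, then $O$ lies in the open complement of that closure and, being a finite isolated set in a zero-dimensional space, is a clopen $T$-invariant subset of $X$. I would apply the first case to the subsystem $(X\setminus O,T)$ to obtain a decisive BV-model $(B',<')$, and then graft a ``periodic tail'' onto $B'$: a new vertex $w_1\in V_1$ joined to $v_0$ by $p$ edges ordered $1,\ldots,p$, and for each $k\geq 2$ a new vertex $w_k\in V_k$ joined to $w_{k-1}$ by a single edge. This adds exactly $p$ isolated paths $\gamma_1,\ldots,\gamma_p$ to the path space, with $\gamma_p$ the unique isolated maximal path and $\gamma_1$ the unique isolated minimal path; the bijectivity of $\ov T_V$ between $X_{\max}$ and $X_{\min}$ then forces $\ov T_V(\gamma_p)=\gamma_1$, producing a cycle of length $p$ conjugate to $O$ and realizing the ``miss one point'' alternative in Proposition \ref{decis_krit}(B).

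\textbf{The main obstacle} I expect lies in the first case of sufficiency, specifically in the simultaneous satisfaction of (i)--(iv) above. The Krieger marker lemma gives large tower heights for aperiodic systems, but the coexistence of periodic points---whose tower heights are rigidly bounded by their periods---complicates both the refinement step and the choice of how $B_{n+1}$ shrinks inside $B_n$ while still containing $A_0$. Carrying out this inductive refinement with care, in the spirit of the constructions in \cite{downarowicz_survey} and \cite{medynets:2006}, is the technical heart of the proof.
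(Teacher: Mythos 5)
Your overall architecture coincides with the paper's: necessity via the observation that every $\ov T_V$-periodic orbit must contain a maximal path, combined with Lemma \ref{periodic_cross_section}; reduction of the ``one missed orbit'' case to the dense case by grafting an isolated periodic component whose unique isolated maximal path is forced onto the unique isolated minimal path; and sufficiency in the dense case via a nested tower construction whose bases have nowhere dense intersection. The necessity argument and the reduction are essentially complete and correct (one small loose end: the isolated point $x_0\in\inte(X_{\max})$ could a priori be aperiodic, in which case one should check that the first alternative of the theorem holds; it does, because then every periodic orbit meets the nowhere dense set $X_{\max}\setminus\{x_0\}$). The genuine gap is exactly where you place it yourself: the simultaneous satisfaction of (i)--(iv) is not a routine technicality to be ``carried out with care in the spirit of \cite{medynets:2006}''---it is the substance of the theorem, and the paper devotes a separate result (Lemma \ref{marker_lemma}) to it. The paper's solution fixes a dense sequence $\{x_k\}$ of aperiodic points, takes Krieger marker sets with $\epsilon_k$ small enough that $x_k$ is covered, and then applies three shift-equivariant continuous manipulations: a uniform horizontal shift of all $k$-markers so that $x_k$ has a marker at position $-1$ and none in $[0,n_k-2]$; insertion of markers outside ``forbidden zones'' to bound the gaps; and an ``upward adjustment'' moving $(k{+}1)$-markers left onto $k$-markers to nest the marker sets, with the growth condition $n_{k+1}\ge\sum_{i\le k}n_i+2$ guaranteeing that $x_k$ still avoids $F_k$ afterwards, whence $F_\infty=\bigcap_kF_k$ has dense complement. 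Your recipe $A_0\subseteq B_n\subseteq X\setminus(U_1\cup\dots\cup U_n)$ is a plausible target, but you have not shown that such a clopen $B_n$ can simultaneously be a complete section with bounded return times, sit inside $B_{n-1}$, and carry towers refining $\mathcal P_{n-1}$; these constraints interact nontrivially near the closure of the set of periodic points, which (unlike in the aperiodic setting) may be all of $X$.

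A second, related gap: the sentence ``the associated Bratteli diagram has $X_{\max}$ corresponding under the resulting conjugacy to $T^{-1}(\bigcap_n B_n)$'' presupposes that the path space is equivariantly homeomorphic to $X$, i.e.\ your condition (iv), which is not automatic for tower partitions and is not obtained for free alongside (i)--(iii). The paper meets this head-on: a nested sequence of $k$-rectangles (tower itineraries through position $0$) may determine only a half-array, or even a vertical strip of bounded width, precisely at points adjacent to a marker of infinite order---which are exactly the points of $\bigcap_n B_n$ that your construction is designed to control. This is why the diagram's vertices are taken to be $k$-trapezoids (rectangles padded on both sides by lower-order rectangles), which exhaust the whole array along any infinite path. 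In your language this amounts to refining each tower by a generating sequence of clopen partitions; it is achievable, but it must be built into the induction rather than assumed.
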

The proof is given after a preparatory section containing the exposition of the main tool---the markers.

\section{Systems of markers} Throughout we fix a zero-dimensional system \xt. We denote by $\Per_{[1,n-1]}$ the set of all periodic points of \xt\ with minimal periods smaller than $n$. Our starting point is the Krieger's Marker Lemma (see \cite{Bo83}), which we will use in the following version:

\begin{thm}[Krieger's Marker Lemma]\label{kriegerg}
For every $\epsilon>0$ and every natural $n$ there exists a clopen set $F=F(n,\epsilon)$ such that:
\begin{enumerate}[(1)]
	\item no orbit visits $F$ twice in $n$ steps  (i.e., $F,TF,\dots,T^{(n-1)}F$ are pairwise disjoint;
	we will say that $F$ is \emph{$n$-separated}),
	\item $\bigcup_{i=-n}^n T^iF\supset X\setminus (\Per_{[1,n-1]})^\epsilon$,
\end{enumerate}
where $A^\epsilon$ denotes the $\epsilon$-neighborhood of a set $A$.
\end{thm}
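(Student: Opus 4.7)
The plan is a standard greedy construction on a sufficiently fine clopen partition, preceded by two reductions: carving a clopen neighbourhood of $\Per_{[1,n-1]}$ and producing a uniform orbit-separation constant on its complement. Since $\Per_{[1,n-1]}=\bigcup_{k=1}^{n-1}\{x:T^kx=x\}$ is closed and $X$ is zero-dimensional, I first pick a clopen set $V$ with
\[
\Per_{[1,n-1]}\subseteq V\subseteq\Per_{[1,n-1]}^{\epsilon}.
\]
On the compact set $X\setminus V$ the continuous function $x\mapsto\min_{0\le i<j<n}d(T^ix,T^jx)$ is strictly positive (a zero would force a periodic point of period less than $n$) and therefore bounded below by some $\delta>0$.

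Next I take a clopen partition $\mathcal U$ of $X$ refining $\{V,X\setminus V\}$ and having all atoms of diameter less than $\delta$. Any atom $U\in\mathcal U$ contained in $X\setminus V$ is then automatically $n$-separated, since a point $y\in U\cap T^kU$ with $1\le k<n$ would satisfy $T^{-k}y\in U\subseteq X\setminus V$ and $d(T^{-k}y,y)<\diam(U)<\delta$, contradicting the definition of $\delta$. Enumerate the atoms of $\mathcal U$ that lie in $X\setminus V$ as $U_1,\dots,U_\ell$, set $F_0=\emptyset$, and build the marker greedily:
\[
F_j\;=\;F_{j-1}\cup\Bigl(U_j\setminus\bigcup_{k=-(n-1)}^{n-1}T^{k}F_{j-1}\Bigr),\qquad j=1,\dots,\ell.
\]
Each $F_j$ is clopen by induction; set $F:=F_\ell$.

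To verify $n$-separation, suppose $x,T^kx\in F$ with $1\le k<n$, and let $i$ and $j$ be the steps at which $x$ and $T^kx$ enter $F$. If $i=j$, both lie in the $n$-separated atom $U_i\subseteq X\setminus V$, which is impossible; if $i<j$ then $x\in F_{j-1}$, so $T^kx\in T^kF_{j-1}$, contradicting the fact that $T^kx$ survived the subtraction at step $j$; the case $i>j$ is symmetric. For coverage, any $z\in X\setminus V$ lies in some $U_j$, and at step $j$ either $z$ is added to $F$ or $z\in T^kF_{j-1}\subseteq T^kF$ for some $|k|<n$; hence
\[
\bigcup_{i=-n}^{n}T^iF\;\supseteq\;\bigcup_{|k|<n}T^kF\;\supseteq\;X\setminus V\;\supseteq\;X\setminus\Per_{[1,n-1]}^{\epsilon}.
\]
The only nontrivial step is obtaining the uniform constant $\delta$ in the first paragraph; once the partition is matched to that scale, the greedy induction and the two verifications are routine.
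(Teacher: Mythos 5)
The paper does not prove this statement at all --- it imports Krieger's Marker Lemma from Boyle's paper [Bo83] and only records the version it needs --- so there is no in-paper proof to compare against. Your argument is correct and complete, and it is essentially the standard one: reduce to the complement of a clopen neighbourhood $V$ of $\Per_{[1,n-1]}$, extract a uniform separation constant $\delta>0$ by compactness, refine to a clopen partition at scale $\delta$ so that each atom outside $V$ is automatically $n$-separated, and then run the greedy exhaustion. All the individual steps check out: the identification of $\Per_{[1,n-1]}$ as a closed set, the positivity of $\min_{0\le i<j<n}d(T^ix,T^jx)$ off $V$, the clopenness of each $F_j$, and the three-case disjointness argument. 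Two minor remarks: your construction actually yields the slightly stronger coverage $\bigcup_{|i|\le n-1}T^iF\supset X\setminus(\Per_{[1,n-1]})^{\epsilon}$, which is of course more than enough; and the degenerate case $n=1$ (where the minimum in the definition of $\delta$ is over an empty index set) is trivial and could be dispatched with one sentence, e.g.\ by taking $F=X$.
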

In zero-dimensional systems given in the array representation, the times of visits of a point (array) $x$
in the marker set $F$ can be conveniently pictured as additional symbols (which we will call \emph{markers}) in form of short vertical bars inserted in a selected row of $x$ (we can assign the row numbers to the marker sets as we wish), by the following rule: whenever $T^nx\in F$ then we put a bar
(in the selected row) between the symbols at the horizontal positions $n$ and $n+1$. Because the marker sets are clopen, adding the markers produces a topologically conjugate representation of the system. In fact, we will do more than that: we choose a fast increasing \sq\ of positive integers $\{n_k\}_{k=1}^\infty$ and a decreasing to zero \sq\ of parameters $\{\epsilon_k\}_{k=1}^\infty$, and for each $k$ we call the set $F(n_k,\epsilon_k)$ the \emph{$k$th marker set}, and we denote it shortly by $F_k$. Also, we agree that the markers corresponding to the visits in $F_k$ (which we will call $k$-markers) will be put in row number $k$. When this is done for every $k\ge 1$, we obtain a conjugate \emph{array representation of \xt\ with markers} distributed in every row of every array. This is in fact a ``usual'' array representation of \xt\ which uses the enlarged alphabets $\Lambda_k^* = \Lambda_k\times\{\emptyset, |\} = \{a,a|: a\in\Lambda_k\}$.

In this setup, the Krieger's Marker Lemma has the following interpretation:
\begin{enumerate}[(1)]
	\item the markers in every row $k$ of every array appear with gaps at least $n_k$,
	\item arrays sufficiently distant from periodic points with periods less than $n_k$ have, in row $k$ and
	around the horizontal coordinate $0$,	markers appearing with gaps bounded by $2n_k+1$,
	\item in periodic arrays or arrays close to periodic ones markers in further rows $k$ may appear with
	gaps larger than $2n_k+1$ (even infinite, i.e., the markers may be missing on either side).
\end{enumerate}

The markers can be easily manipulated (shifted, added, removed, copied from one row to another, etc.). Every such manipulation translates to (usually complicated) set operations on the marker sets, but the array representation enables one to forget these complications. In order to keep our system conjugate,
we only need to make sure that our marker manipulations are
\begin{itemize}
	\item shift-equivariant, and
	\item depend locally on a bounded area in the array only (this is continuity),
\end{itemize}
(invertibility is automatic, as removing the markers is always a continuous procedure).
The lemma below connects the assumption on dense aperiodic points with the distribution of markers, and it
is the key ingredient of the proof of the main theorem.

\begin{lemm}\label{marker_lemma}
Let $(X,T)$ be a zero-dimensional dynamical system such that the set of aperiodic points is dense in $X$.
Then $(X,T)$ admits an array representation with markers\footnote{This markered array representation is different from a standard one introduced in~\cite{downarowicz_survey} for aperiodic systems.} such that the following restrictions hold, for every $k\ge 1$:

\begin{enumerate}[(1)]
	\item the markers in row $k+1$ are allowed only at horizontal positions of the markers in row $k$,
	\item the gaps between markers in row $k$ are bounded from above,
	\item the set of arrays which have a marker of infinite order is of first category.
\end{enumerate}
\end{lemm}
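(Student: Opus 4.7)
The plan is to construct the clopen marker sets $\{F_k\}_{k\ge 1}$ by induction on $k$, arranging that $F_{k+1}\subset F_k$ (which secures (1)), that each $F_k$ is $n_k$-separated and syndetic along every orbit with some uniform gap bound $M_k$ (which gives (2)), and that the inductive choices thin the sets enough so that the closed intersection $F_\infty:=\bigcap_k F_k$ has empty interior. A marker of infinite order at position $i$ of an array $x$ means precisely $T^ix\in F_\infty$, so the set of arrays carrying such a marker is $\bigcup_{i\in\z}T^iF_\infty$, a countable union of nowhere dense sets and hence first category; this yields (3).

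Throughout I fix a rapidly increasing sequence $\{n_k\}$, a null sequence $\epsilon_k\downarrow 0$, and an enumeration $\{U_j\}$ of a clopen base of $X$. For the base step I apply Theorem \ref{kriegerg} to obtain a clopen $n_1$-separated set whose first $n_1$ forward and backward iterates cover $X$ except possibly an $\epsilon_1$-neighborhood of $\Per_{[1,n_1-1]}$. To secure bounded orbit-gaps also on the periodic part, I augment with one marker per periodic orbit of period $<n_1$: for each such $p$, the closed zero-dimensional set $\{x:T^px=x\}$ carries a $\mathbb{Z}/p\mathbb{Z}$-action of $T$ that is free on its shorter-period-free stratum, and the standard argument for finite free actions on compact zero-dimensional spaces supplies a clopen fundamental domain there. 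Adjoining these domains and shrinking if necessary to preserve $n_1$-separation yields $F_1$ with all required properties and some orbit-gap bound $M_1$.

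The inductive step from $F_k$ to $F_{k+1}$ is analogous but constrained by the nesting $F_{k+1}\subset F_k$. Choosing $n_{k+1}\gg M_k$, along each orbit I select roughly every $\lceil n_{k+1}/M_k\rceil$-th $F_k$-visit; this is a shift-equivariant, bounded-range local rule on the array representation, so the resulting set is clopen. Periodic orbits of period in $[n_k,n_{k+1})$ are covered by adjoining fresh clopen fundamental domains on the corresponding strata. Simultaneously I enforce the thinning needed for (3): since aperiodic points are dense, the basis element $U_k$ contains an aperiodic point $y$, and the margin provided by $n_{k+1}\gg M_k$ allows me to excise a small clopen neighborhood of $y$ from the selected set by locally replacing the chosen $F_k$-visit with a neighbouring one, so that $F_{k+1}\cap U_k$ has strictly smaller closure than $U_k$. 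Iterating over all $k$ guarantees that $F_\infty$ has empty interior.

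The main obstacle is the simultaneous bookkeeping in the inductive step: $F_{k+1}$ must lie in $F_k$, be $n_{k+1}$-separated, be syndetic along every orbit (including the periodic orbits of intermediate period, whose union is closed but typically not clopen in $X$), and be thinned over the current basis element $U_k$. The argument rests on the existence of clopen fundamental domains for finite free actions on zero-dimensional compact spaces combined with the fast growth $n_{k+1}\gg M_k$, which leaves room both for the nested syndetic selection and for the extra clopen thinning required by condition (3).
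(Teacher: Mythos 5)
Your overall architecture (nested clopen marker sets $F_{k+1}\subset F_k$ with bounded gaps, plus marker-free windows over a basis to force $F_\infty=\bigcap_kF_k$ to have empty interior) is the right target, and your device for condition (3) is close in spirit to the paper's, which punches a marker-free window at the origin of each member of a dense sequence of aperiodic points. But your construction of the $F_k$ themselves has two genuine gaps. First, a clopen, $n_k$-separated set cannot meet a periodic orbit of period $p<n_k$: if $x\in F_k$ and $T^px=x$ then $x\in F_k\cap T^pF_k$, contradicting separation, so your base and inductive steps are self-contradictory as stated. Moreover, the ``clopen fundamental domains'' you invoke are only relatively clopen in the periodic strata, which are in general neither open nor closed in $X$; adjoining them to a clopen Krieger set yields a set that is not clopen in $X$ (consider the fixed point $0^\infty$ of the full shift, a limit of aperiodic points). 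Since clopenness of the marker sets is precisely what makes the markered array representation a topological conjugate, this breaks the construction. The paper's resolution is to abandon separation altogether: after placing and uniformly shifting the Krieger markers, it declares the $n_k-1$ positions to the right of each marker ``forbidden'' and inserts a marker at \emph{every} non-forbidden position. This is a bounded-range local rule, hence clopen, and it bounds the gaps by $n_k$ everywhere---in particular near periodic orbits, where the Krieger set may be empty and the fill-in simply marks every position. The lemma never needs a lower bound on the gaps.

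Second, ``select every $\lceil n_{k+1}/M_k\rceil$-th $F_k$-visit along each orbit'' is not a shift-equivariant bounded-range local rule: counting visits requires an origin on each orbit, and there is no continuous equivariant choice of one. The paper instead builds the $(k\!+\!1)$st marker set independently via Krieger's lemma (plus shifting and fill-in) and obtains the nesting afterwards by an ``upward adjustment'': each marker in row $k+1$ is slid left until it lands on a marker position of row $k$. Because the row-$k$ markers are syndetic with a known bound, the slide is by a bounded amount, so it is a local rule, preserves clopenness, and enlarges the row-$(k\!+\!1)$ gaps only by a bounded quantity; the growth condition $n_{k+1}\ge\sum_{i=1}^{k}n_i+2$ then guarantees that the marker-free windows needed for (3) survive the adjustment. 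You would need to replace both your periodic-orbit step and your visit-counting step by devices of this kind for the proof to go through.
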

Before the proof let us explain that the first condition means simply that the corresponding marker sets are nested (i.e., $F_{k+1}\subset F_k$ for each $k$). In such case, a \emph{marker of infinite order} (referred to in the last condition) occurs at a horizontal position $n$ of some $x$ when $T^nx\in F_\infty := \bigcap_k F_k$ (then the markers form a vertical line extending through all rows). Thus the set of arrays which have a marker of infinite order equals $\bigcup_{n\in\z}T^nF_\infty$. Since $F_\infty$ is closed, to satisfy the last condition we only need to arrange that $F_\infty$ has empty interior (i.e., dense complement).

\begin{proof}
Fix some dense in $X$ sequence $\{x_k\}_{k=1}^\infty$ of aperiodic points. Let $\{n_k\}_{k=1}^\infty$ be a strictly increasing sequence of positive integers such that $n_{k+1}\ge \sum_{i=1}^k n_i+2$, for each $k\ge 1$. Also fix a decreasing to zero \sq\ of positive numbers $\{\varepsilon_k\}_{k=1}^{\infty}$ such that $x_k$ does not belong to $(\mathsf{Per}_{[1,n_k-1]})^{\varepsilon_k}$. Let $\widetilde{F}_k=\widetilde{F}(n_k,\epsilon_k)$ be the corresponding Krieger's marker set. In particular, the union $\bigcup_{i = -n_k}^{n_k}T^i (\widetilde{F}_k)$ contains $x_k$. We pass to the array representation of \xt\ with the markers corresponding to the sets $\widetilde{F}_k$. Now each $x_k$ has a marker in row $k$ at a position $i_k\in[-n_k,n_k]$, and the following marker in this row is at least $n_k$ positions further to the right. Let us now shift all the markers in row $k$ in all arrays by $i_k-1$ positions to the left (if $i_k\le 0$ we actually shift them to the right). As a result, $x_k$ has in row $k$ a marker precisely at the coordinate $-1$ and no markers in $[0,n_k-2]$. Notice that such uniform shifting (by the same vector) of all markers in row $k$ in all arrays is a continuous and shift-equivariant algorithm.

We will now put more markers in row $k$ (using another continuous and shift-equivariant algorithm). This will reduce the gap lengths so they become bounded. In every array we call the union of the intervals of length $n_k-1$ lying directly to the right of every marker ``the forbidden zone'' (note that there are no markers in the forbidden zone). Then we put new markers at every position outside the forbidden zone. After this step the markers in row $k$ appear with gaps bounded by $n_k$. Also note that in $x_k$ we still have no markers in $[0,n_k-2]$ in row $k$, because this interval was part of the forbidden zone.

Finally we apply yet another shift-equivariant and continuous algorithm, which we call ``upward adjustment''. It turns the marker sets into a nested \sq\ (i.e., it makes the markers fulfill the condition (1) of the assertion of the lemma). We proceed inductively. In step 1 we do not move the markers in row $1$. In step $k+1$ we move each marker in row $k+1$ to the left, until its horizontal position matches that of a marker in row $k$. Note that the movement is by at most $\sum_{i=1}^k n_i$, i.e., by at most $n_{k+1}-2$. It will often happen that several markers in row $k+1$ are moved to a common position (``glued together''), but this does not bother us. After this is done for every $k$, the construction is finished.

Now Condition (1) is clearly fulfilled. Condition (2) also holds; the upward adjustment can increase the gap sizes, but only by a bounded amount. For (3) observe that $x_k$ still has no marker at the coordinate $0$ of row $k$, i.e., $x_k\notin {F}_k$ and thus $x_k\notin {F}_\infty$. Since this is true for every $k$, the set ${F}_\infty$ has a dense complement, which ends the proof.
\end{proof}

\section{Proof of the main theorem}
\begin{proof}[Proof of Theorem \ref{main}]
Suppose the closure of the aperiodic points misses more than one periodic orbit. Then any Borel set intersecting every periodic orbit has an interior consisting of more than one point. This applies to the set of maximal paths of any BV-model, thus condition (B) given in the Introduction is not satisfied for any such model, and the system is not Bratteli-Vershikizable.

We pass to the proof of the inverse implication. Consider a system \xt\ such that $X$ consists of two parts: $X'$ equal to the closure of all aperiodic points and perhaps one isolated periodic orbit.
Suppose we have a BV-model for $X'$. It can be enhanced to a BV-model of $X$ by adding a separate diagram for the missing periodic orbit, having one maximal and one minimal paths. Note that both these paths are isolated points in the path-space model of $X$. Now suppose that the BV-model for $X'$ is not only decisive, but also that the sets of maximal and minimal paths both have empty interiors. In particular, they contain no isolated points. Then any prolongation of the Vershik map to the enhanced diagram must send the unique isolated maximal path to the unique isolated minimal path, closing the periodic orbit correctly. Thus, the enhanced diagram is decisive. Summarizing, it suffices to deal with systems \xt\ in which aperiodic points lie densely, and for every such system prove the existence of a decisive BV-model in which the set of maximal paths has empty interior (then the same will hold automatically for the set of minimal paths).

So, consider a system \xt\ with a dense set of aperiodic points. We apply Lemma~\ref{marker_lemma} to obtain
an array representation with markers satisfying (1)-(3). From this, we shall build an ordered Bratteli diagram. Let us introduce some terminology: By a $k$-block (appearing in $X$) we shall mean any block
of symbols from $\Lambda_k$ appearing between two neighboring markers in row $k$ of some array $x\in X$.
It is important that we ignore the position of the $k$-block along the horizontal axis, i.e., we think of a $k$-block as an element of the Cartesian power $\Lambda_k^l$ where $l$ it the block's length. We also include in the $k$-block both markers that embrace it; however, when concatenating $k$-blocks we must remember to ``glue'' the markers meeting at the contact places. A $k$-rectangle is the rectangular block of symbols that can be seen in rows $1$ through $k$ directly above a $k$-block (again, we ignore its horizontal position). Since the marker sets are nested, every $(k\!+\!1)$-rectangle $R$ is a concatenation of some number of $k$-rectangles $R^{(1)}\dots R^{(q)}$ with a $(k\!+\!1)$-block $B$ added in row $k\!+\!1$. Symbolically, we write this as
$$
R = \left[\begin{matrix} R^{(1)}R^{(2)}\dots R^{(q)}\\ B\end{matrix}\right].
$$
We need to enlarge slightly the $k$-rectangles; by a $k$-trapezoid we shall understand a configuration (appearing in some array $x\in X$) which consists of a $k$-rectangle (which we call the \textit{core} of the $k$-trapezoid) enlarged in rows $1$ through $k\!-\!1$ by two $(k\!-\!1)$-rectangles (one on each side), then, in rows $1$ through $k\!-\!2$ by two more $(k\!-\!2)$-rectangles (one on each side), etc. The figure below shows a $3$-trapezoid.

\begin{center}
\includegraphics[width=12cm]{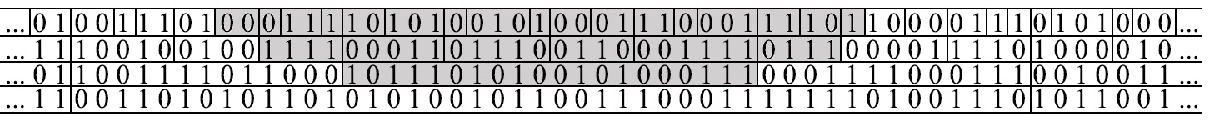}
\end{center}

Of course, every $k$-rectangle may be the core of several $k$-trapezoids differing in the added rectangles.
Notice that while each $(k\!+\!1)$-rectangle $R$ has in its top $k$ rows a concatenation of $k$-rectangles, say $R_1,R_2,\dots,R_q$, a $(k\!+\!1)$-trapezoid $S$ which extends $R$ has in its top $k$ rows an ``overlapping concatenation'' of $q\!+\!2$ $k$-trapezoids, say $S_0,S_1,\dots,S_q,S_{q+1}$.
We will call the $k$-trapezoids $S_1,\dots,S_q$ \emph{internal}, while $S_0$ and $S_{q+1}$ will be called \emph{external}. The internal $k$-trapezoids extend the $k$-rectangles included in $R$, the external ones do not.

We define the vertex sets of the Bratteli diagram as follows: level $0$ consists of one vertex $v_0$,
each level $k\ge 1$ consists of all possible $k$-trapezoids occurring in $X$. Since the gaps between markers are bounded, each level set is finite. The edges with source in a $(k+1)$-trapezoid $R$ connect it with all its internal $k$-trapezoids and the order corresponds to the natural order as they appear in the ``overlapping concatenation'' (if $S_i=S_j$ for some $i\neq j$, we obtain multiple edges with the same source and target).

Every infinite path with target at $v_0$ of this diagram corresponds to a nested \sq\ of congruent (i.e., agreeing on the common area) and growing $k$-trapezoids, with a determined position of the horizontal coordinate zero always inside the core. Together they define an entire array $x\in X$ with a determined position of the horizontal coordinate zero. It is elementary to see that this correspondence is a homeomorphism between the path space $X_B$ and the array representation $X$. We remark that this might not work with $k$-rectangles; a path of $k$-rectangles could define only a left or right half-array or even just a vertical strip of bounded width. This is the reason why we use trapezoids. Further, it is easy to see that a path is maximal if and only if the respective array has a marker of infinite order at the horizontal coordinate zero and that the Vershik map on non-maximal paths corresponds to the left shift of the respective arrays. The former fact (in view of property (2) in Lemma \ref{marker_lemma}) implies that the set of maximal paths has empty interior, the latter gives uniform continuity of the Vershik map and coincidence of its unique prolongation with the shift. This ends the proof.
\end{proof}

\begin{remar} According to Theorem~\ref{main}, in any case of \xt, the subsystem obtained as the closure of the set of aperiodic points always possesses a decisive Bratteli--Vershik model $(B,<)$. Then one can extend the ordered diagram so that some prolongation of the Vershik map models the entire system. This follows directly from the proof above. Indeed, pick the sequence of aperiodic points $\{x_k\}$ which is dense in the closure of the set of all aperiodic points. Then, by the proof of Theorem~\ref{main}, we can build a BV-model of \xt\ such that the interior of $X_{\max}$ misses the closure of the set of aperiodic points. Then this model is decisive on this closure. In other words, one can say that \emph{non-decisiveness concerns only the interior of the set of periodic points}, i.e., there always exists a BV-model such that all prolongations differ only on this set.
\end{remar}

\section{Weak decisiveness}
Some ordered diagrams, although not decisive, have the property that every prolongation of the Vershik map produces a conjugate model of the same system, which is made precise in the definition below. In a sense, these diagrams still completely determine the dynamics of the system.

\begin{defin}
We call an ordered Bratteli diagram \emph{weakly decisive} if the Vershik map can be prolonged to a homeomorphism of the whole path space and all such prolongations are topologically conjugate.
\end{defin}

An example of a non-decisive, weakly decisive Bratteli diagram is given in \cite[Example 6.15]{downarowicz_survey} and copied below as Example~\ref{medynets}. In this example, weak decisiveness follows from the fact that the images of $X_{\min}$ shrink to zero in diameter, as stated in the following proposition:

\begin{prop}\label{weak dec}
Let $(B,<)$ be an ordered Bratteli diagram such that the Vershik map $T_V$ can be prolonged to a homeomorphism $\overline T_V$ of the whole path space $X_B$, the sets $X_{\max}$ and $T_V^n(X_{\min})$ for $n\ge 0$ are pairwise disjoint and $\lim_n\diam(T_V^n(X_{\min}))=0$. Then $(B,<)$ is weakly decisive.
\end{prop}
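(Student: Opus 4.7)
The plan is to take two arbitrary prolongations $\overline T_V$ and $\overline T'_V$ of the Vershik map and construct an explicit conjugacy $\phi$ between them. Writing $\sigma$ and $\sigma'$ for the restrictions of the two prolongations to $X_{\max}$ (each of them is a homeomorphism $X_{\max}\to X_{\min}$), I would start by considering the composition $\Psi:=\overline T'_V\circ \overline T_V^{-1}$, which is automatically a homeomorphism of $X_B$. A direct unwinding of the definition shows that $\Psi$ equals the identity off $X_{\min}$ (because off $X_{\min}$ the map $\overline T_V^{-1}$ lands in $X_B\setminus X_{\max}$, where $\overline T'_V$ agrees with $T_V=\overline T_V$) and equals $\pi:=\sigma'\circ\sigma^{-1}$ on $X_{\min}$. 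For every $n\ge 0$, the conjugate $\Psi_n:=\overline T_V^{\,n}\circ\Psi\circ \overline T_V^{-n}$ is then a homeomorphism of $X_B$, it coincides with the identity off $T_V^n(X_{\min})$, and it restricts to a homeomorphism of $T_V^n(X_{\min})$ onto itself.

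By the pairwise-disjointness assumption, the supports $T_V^n(X_{\min})$, $n\ge 0$, are mutually disjoint, hence the $\Psi_n$'s commute and the finite compositions $\Phi_N:=\Psi_0\circ\Psi_1\circ\cdots\circ\Psi_N$ are genuine homeomorphisms of $X_B$. I would define the candidate conjugacy by
\[
\phi(y):=\begin{cases}\Psi_n(y) & \text{if } y\in T_V^n(X_{\min}),\\ y & \text{if } y\in X_B\setminus\bigcup_{n\ge 0}T_V^n(X_{\min}),\end{cases}
\]
which is a well-defined bijection because each $y$ lies in at most one $T_V^n(X_{\min})$ and each $\Psi_n$ bijects that set. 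The key observation is that $\Phi_N$ and $\phi$ can disagree only on $\bigcup_{n>N}T_V^n(X_{\min})$, and on every $T_V^n(X_{\min})$ with $n>N$ both $\Phi_N$ and $\phi$ map into $T_V^n(X_{\min})$; consequently
\[
\sup_{y\in X_B}\dist\bigl(\Phi_N(y),\phi(y)\bigr)\le\sup_{n>N}\diam\bigl(T_V^n(X_{\min})\bigr)\xrightarrow[N\to\infty]{}0.
\]
Thus $\Phi_N\to\phi$ uniformly, so $\phi$ is continuous; as a continuous bijection of the compact Hausdorff space $X_B$, it is a homeomorphism.

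It remains to verify the conjugacy equation $\phi\circ\overline T_V=\overline T'_V\circ\phi$, which I would check separately on the three natural regions. On $X_{\max}$ both sides evaluate to $\sigma'$; on each $T_V^n(X_{\min})$, using that $X_{\max}$ is disjoint from all $T_V^m(X_{\min})$ (so that $\overline T_V$ and $\overline T'_V$ both act as $T_V$ there), both sides evaluate to $\overline T_V^{\,n+1}\circ\pi\circ\overline T_V^{-n}$; and on the remainder of $X_B$ both sides reduce simply to $T_V$. The main obstacle, and the only place where the metric hypothesis $\diam(T_V^n(X_{\min}))\to 0$ enters, is the uniform-convergence step that secures continuity of $\phi$ --- everything else depends only on the disjointness of the supports and on the continuity of the two given prolongations.
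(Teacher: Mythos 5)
Your proposal is correct and is essentially the paper's own proof: the conjugacy $\phi$ you build (identity off $\bigcup_{n\ge 0}T_V^n(X_{\min})\cup X_{\max}$, and $\overline T_V^{\,n}\circ\pi\circ\overline T_V^{-n}$ on the $n$th image) coincides with the map $h(x)=T_2^{n_x}(T_1^{-n_x}(x))$ used in the paper, and the disjointness and diameter hypotheses enter at exactly the same points. The only cosmetic difference is that you obtain continuity by exhibiting $\phi$ as a uniform limit of the homeomorphisms $\Phi_N$, whereas the paper checks continuity directly at limit points of the sets $T_V^{n}(X_{\min})$; both arguments rest on the same estimate $\diam(T_V^n(X_{\min}))\to 0$.
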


\begin{remar}
In fact it suffices that the sets $\inte(X_{\max})$ and $T_V^n(\inte(X_{\min}))$ are disjoint and have diameters shrinking to zero. Since the proof requires more details, while the above, simpler formulation works in Example~\ref{medynets}, we have given up the weaker assumption.
\end{remar}

\begin{proof}
Let $T_1$ and $T_2$ be two different homeomorphisms which are both prolongations of $T_V$, i.e.
$T_1(x) = T_2(x)=T_V(x)$ for every $x \notin \inte(X_{\max})$. Clearly, $T_1(X_{\max})=T_2(X_{\max})=X_{\min}$. The disjointness condition implies that for any $n\ge 1$, we also have $T_1^n(X_{\max})=T_2^n(X_{\max})$. Denote
$$
U=\bigcup_{n\ge 0}T_1^n(X_{\max}) = \bigcup_{n\ge 0}T_2^n(X_{\max}) = X_{\max}\cup\bigcup_{n\ge 0}T_V^n(X_{\min}).
$$
For each $x\in U$ let $n_x\ge 0$ be the unique index such that $x\in T_1^{n_x}(X_{\max})$
(equivalently, $x\in T_2^{n_x}(X_{\max})$). Define
$$
h(x) = \begin{cases}
T_2^{n_x}(T_1^{-n_x}(x));& \text{ if $x\in U$}\\
x;& \text{ otherwise}.
\end{cases}
$$
It is now clear that $h$ is a bijection (the inverse is defined by the same formula with $T_1$ and $T_2$ exchanged) and that it is continuous on $U$. For continuity of $h$, we only need to consider points $x$ which are limits of \sq s of the form $\{x_k\}_{k\ge 1}$ such that $x_k\in T_1^{n_k}(X_{\max})$ with $n_k\to\infty$. Since both $h(x_k)$ and $x_k$ are in $T_1^{n_k}(X_{\max})=T_V^{n_k-1}(X_{\min})$, the distance between these two points tends to zero. It remains to show that $h(x)=x$. This is obviously true if $x$ does not belong to the union $U$. If $x$ belongs to some $T_1^{n_x}(X_{\max})$ then it belongs to the boundary of this set, which implies that $T_1^{-n_x}(x)$ belongs to the boundary of $X_{\max}$, while on this latter boundary $T_2=T_1$, implying $h(x)=x$.

Finally, we need to check whether $h(T_1(x)) = T_2(h(x))$ for all $x \in X_B$.
Indeed, if $x\in U$ then $n_{T_1(x)} = n_x+1$ and thus
$$
h(T_1(x)) = T_2^{n_x+1}(T_1^{-n_x-1}(T_1(x))) = T_2^{n_x+1}(T_1^{-n_x}(x)).
$$
On the other hand,
$$
T_2(h(x)) = T_2(T_2^{n_x}(T_1^{-n_x}(x))) = T_2^{n_x+1}(T_1^{-n_x}(x)).
$$
In case when $x\notin U$ we have $h(T_1(x)) = T_1(x) \text{ while } T_2(h(x)) = T_2(x)$
(notice that $T_1(x)$ may belong to $X_{\max}$, but on $X_{\max}$ $h$ is also the identity function).
Finally, for such an $x$, $T_1(x) = T_2(x)$, so the desired equality also holds.
\end{proof}

A question arises: are there zero-dimensional systems which do not possess a decisive BV-model (are not Bratteli-Vershikizable) yet admit a weakly decisive BV-model. These systems could also be considered
Bratteli-Vershikizable (in a weaker sense). We will show that such systems do not exist, i.e.,
any system either admits a decisive BV-model or each of its BV-models admits non-conjugate prolongations.

\begin{thm}\label{ble}
Let \xt\ be a zero-dimensional system which is not Bratteli-Vershikizable. Then, the Vershik map $T_V$ on any  BV-model of \xt\ admits a prolongation $(X_B,\widetilde T_V)$ to the path space, not conjugate to \xt.
\end{thm}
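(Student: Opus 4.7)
My plan is to combine Theorem~\ref{main} with an explicit swap construction. Since $(X,T)$ is not Bratteli-Vershikizable, Theorem~\ref{main} gives us at least two distinct periodic orbits $O_1,O_2$ that miss the closure of the aperiodic points, with (minimal) periods $p_1$ and $p_2$. Each $O_i$ lies in an open set disjoint from the closure of the aperiodic points, hence is a clopen finite set of isolated points of $X$. Now fix any BV-model $(B,<)$ of $(X,T)$ and its prolongation $\ov T_V$ delivering the conjugacy $(X_B,\ov T_V)\cong(X,T)$. Through this conjugacy, $O_1$ and $O_2$ transfer to two isolated $\ov T_V$-cycles in $X_B$, of lengths $p_1$ and $p_2$, consisting entirely of isolated points.

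As recalled in the Preliminaries, every periodic orbit in a BV-model contains at least one maximal path, so I may pick $m_i\in O_i\cap X_{\max}$ for $i=1,2$ and set $n_i=\ov T_V(m_i)\in O_i$. I then define
$$
\wtl T_V(x)=\begin{cases}n_2,& x=m_1,\\ n_1,& x=m_2,\\ \ov T_V(x),& \text{otherwise.}\end{cases}
$$
Because $m_1,m_2,n_1,n_2$ are all isolated in $X_B$, changing $\ov T_V$ only at the two isolated points $m_1,m_2$ produces a bijection that is automatically continuous, with an automatically continuous inverse; hence $\wtl T_V$ is a self-homeomorphism of $X_B$. Since $m_1,m_2\in X_{\max}$, the map $\wtl T_V$ still equals $T_V$ on $X_B\setminus X_{\max}$, so it is a genuine prolongation of the Vershik map.

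Next I would trace the $\wtl T_V$-orbit of $m_1$: one step to $n_2$; then $p_2-1$ steps through the unchanged part of the $O_2$-cycle back to $m_2$; then one step to $n_1$; then $p_1-1$ steps through the unchanged part of the $O_1$-cycle back to $m_1$. Thus $\wtl T_V$ fuses $O_1$ and $O_2$ into a single cycle of length $p_1+p_2$, while the dynamics outside $O_1\cup O_2$ is unchanged. Any topological conjugacy preserves the multiset of minimal periods of periodic orbits, and this multiset loses the entries $p_1,p_2$ and gains one entry $p_1+p_2$; so $(X_B,\wtl T_V)$ cannot be conjugate to $(X,T)$.

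The only point that deserves care is the trace argument: one must check that the fusion really produces one cycle of length exactly $p_1+p_2$, even if $O_1$ or $O_2$ happens to contain additional maximal paths. This is precisely why the perturbation touches only the two chosen points $m_1$ and $m_2$: any other maximal paths in $O_1\cup O_2$ still travel along their original $\ov T_V$-edges under $\wtl T_V$, so the cyclic structure of $O_i\setminus\{m_i\}$ is preserved and no unintended splitting or extra merging occurs.
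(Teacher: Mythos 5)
There is a genuine gap, and it occurs in the very first step. From the failure of Bratteli--Vershikizability, Theorem~\ref{main} only gives you that the closure of the aperiodic points misses more than one periodic orbit, i.e.\ that the (open, invariant) complement of this closure is a nonempty open set of periodic points containing at least two orbits. It does \emph{not} follow that any of these orbits is isolated: the inference ``$O_i$ lies in an open set disjoint from the closure of the aperiodic points, hence is a clopen finite set of isolated points'' is false, because that open set may itself be a perfect set of periodic points. A standard counterexample is the identity on a Cantor set (compare also the last example of Section~\ref{examples}, where the set of fixed points is a Cantor set): such a system is not Bratteli--Vershikizable, yet it has no isolated points whatsoever, so there are no points $m_1,m_2$ at which your pointwise swap can be performed, and your automatic-continuity argument relies essentially on their being isolated. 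This is exactly why the paper's proof splits into cases and, in the case of at most one isolated periodic orbit, must glue together whole \emph{Cantor sets} of $p$-periodic orbits, chosen to shrink in diameter as they approach the boundary of the open selector set $M_p$, so that the modified prolongation remains continuous there.

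Even in the favourable case where two isolated periodic orbits exist, your concluding invariant is too weak. If the system has infinitely many periodic orbits of minimal period $p_1$ (and likewise for $p_2$ and $p_1+p_2$), then deleting one entry $p_1$, one entry $p_2$ and adding one entry $p_1+p_2$ does not change the cardinality of any of these classes, so the multiset of minimal periods is unaffected and non-conjugacy does not follow. The paper avoids this by arranging that after the modification the period $p$ \emph{disappears entirely} from the collection of isolated periodic orbits --- in case (Ia) by gluing all finitely many isolated $p$-orbits into one longer orbit, and in case (Ib) by gluing all infinitely many of them in a cascade converging to $\partial M_p$ --- and the set of periods realized by isolated periodic orbits is a genuine conjugacy invariant. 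As written, your swap of a single pair of orbits proves the theorem only under the additional assumptions that at least two isolated periodic orbits exist and that some relevant period occurs with finite multiplicity.
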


\begin{proof} Throughout the proof we fix some BV-model of \xt. By Theorem \ref{main}, the set of periodic points of \xt\ has a non-empty interior larger than just one isolated orbit. We begin with a lemma:

\begin{lemm}\label{pom}
Let $P_p$ be an open set consisting of $p$-periodic orbits. Then in the BV-model of \xt, there exists a relatively clopen in $P_p$ (hence open) set $M_p$ selecting exactly one maximal path from each periodic orbit in $P_p$.
\end{lemm}
\begin{proof} First we show that there exists an open (in $X$) set $U$ selecting exactly one point from each periodic orbit in $P_p$. The construction resembles that of a $p$-marker set. Each point $x$ in $P_p$ has a clopen neighborhood $U_x$ such that $U_x,\ T(U_x),\ \dots,\ T^{p-1}(U_x)$ are pairwise disjoint. The cover $\{U_x:x\in P_p\}$ has a countable subcover which we denote by $\{U_n:n\ge 1\}$. We let
$$
U = \bigcup_{n\ge 1} \Bigl(U_n\setminus\bigl(\bigcup_{1\le m<n,\ k=0,\dots,p-1}T^k(U_m)\bigr)\Bigr).
$$
We skip the straightforward verification that $U$ is the desired set. Since $P_p$ is the disjoint union of $U\cap P_p,\ T(U\cap P_p),\ \dots,\ T^{p-1}(U\cap P_p)$, the set $U\cap P_p$ is relatively clopen in $P_p$. Now recall that in the BV-model, each periodic orbit contains at least one maximal path and the set of maximal paths is closed. Let $P_p^{\max}$ be the (relatively closed in $P_p$) set of maximal paths in $P_p$. For each $i=0,1,\dots,p-1$ define $P_p^{i} = P_p^{\max}\cap T^i(U)$. These sets are relatively clopen in $P_p^{\max}$ and their union is $P_p^{\max}$. We can now define
$$
M_p=\bigcup_{i=0}^{p-1}\Bigl(P_p^{i}\setminus\bigl(\bigcup_{0\le j<i,\ k=0,\dots,p-1}T^k(P_p^{j})\bigr)\Bigr).
$$
Again, we skip the straightforward verification that $M_p$ is a selector from orbits contained in $P_p$, consisting of maximal paths. Since this set is obviously clopen in $P_p^{\max}$, it is closed in $P_p$, and since the disjoint union $M_p\cup T(M_p),\cup\cdots\cup T^{p-1}(M_p)$ equals $P_p$, $M_p$ is in fact clopen in~$P_p$.
\end{proof}

We continue with the main proof in which we will consider two main cases.
\begin{enumerate}
	\item[(I)] There are at least two isolated periodic orbits.
	\begin{enumerate}
	\item For some period $p$ the collection of all isolated $p$-periodic orbits is finite non-empty.
	\item The opposite.
  \end{enumerate}
	\item[(II)] There is at most one isolated periodic orbit.
\end{enumerate}

In case (Ia) it is very easy to define a prolongation $\widetilde T_V$ of the Vershik map, such that all isolated orbits of period $p$ are joined into one isolated periodic orbit of a larger period (in case the isolated orbit of period $p$ is unique we can combine it with an isolated orbit of a different period, which exists by the assumption (I)). It this manner the period $p$ no longer appears in isolated orbits for $\widetilde T_V$, which makes it not conjugate to $T$.

In case (Ib), let $p$ be some period occurring in the isolated periodic orbits. Let $P_p$ denote the union of all isolated $p$-periodic orbits. Clearly, $P_p$ is open. By assumption, $P_p$ is infinite, on the other hand it is at most countable. Now we refer to the open set $M_p$ of Lemma \ref{pom}, which, in this case is infinite and countable. The set of accumulation points of $M_p$ is non-empty and disjoint from $M_p$, so it equals the boundary $\partial M_p$. We cover $\partial M_p$ by finitely many disjoint clopen sets $D^{(1)}_i$, each of diameter less than some $\delta_1$, and so that the set $M_p\setminus\bigcup_i D^{(1)}_i$ contains more than one point. Since the last set is finite and all its members are maximal in some isolated $p$-periodic orbits, the map $T_V$ can be prolonged onto these points in such a way that all these orbits are glued into one periodic orbit of period larger than $p$. Next, denote by $M^{(1)}_p$ the set $M_p$ with the finitely many isolated points belonging to the glued orbit removed. We cover $\partial M_p$ again by finitely many disjoint clopen sets $D^{(2)}_j$, each of diameter less than some $\delta_2$ (much smaller than $\delta_1$) and contained in some set $D^{(1)}_{i(j)}$, and so that for each $i$, the difference $D^{(1)}_i\setminus\bigcup_j D^{(2)}_j$ contains more than one point (there are finitely many such points and they belong to $M^{(1)}_p$). Again, the orbits of these points can be glued into one orbit. Notice that on each set $D^{(1)}_i$ such prolongation differs from $T$ by less than some small $\varepsilon_1$ (associated to $\delta_1$ via the modulus of uniform continuity of~$T$).

We continue in the same manner, choosing the numbers $\delta_k$ tending to zero. The resulting new prolongation $\widetilde T_V$ will be continuous: the only points at which continuity needs a verification are the elements of $\partial M_p$. But the distance between $\widetilde T_V$ and the continuous map $T$ tends to zero as we approach this set, so the continuity follows. Now, $\widetilde T_V$ has no isolated periodic orbits of period $p$ at all, and thus it cannot be conjugate to $T$.

Case (II): We remove the unique isolated periodic orbit (if it exists) and throughout the remainder of this proof treat it as non-existing. The interior of the set of periodic points remains non-empty and now has no isolated points. By the Baire Theorem, there exists a period $p$ such that $\inte(\Per_p)$, in this part of the proof denoted as $P_p$, is non-empty. As before, we refer to the open selector set $M_p$. Since $P_p$ is open, it cannot contain relatively isolated points, and since $M_p$ is relatively open in $P_p$ (hence open), it has no relatively isolated points either. Since $X$ is zero-dimensional, it follows that $M_p$ is a countable union of clopen sets $C_n$ with no isolated points. All such sets are homeomorphic to the Cantor set and hence to each other. By further dividing, we can arrange that $\diam(C_n)\to 0$. From here we repeat the proof as in case (Ib) with the only difference that instead of isolated points we have isolated Cantor sets shrinking in diameter to zero as they approach points in $\partial M_p$. As a result, we will construct a prolongation $\widetilde T_V$ for which the set of $p$-periodic points (after removing the unique isolated periodic orbit) is empty. Thus, $\widetilde T_V$ is not conjugate to $T$.
\end{proof}

\section{Examples}\label{examples}

In this section we give examples of decisive and non-decisive Bratteli diagrams.

\begin{example}
The first figure below shows a decisive Bratteli-Vershik system with a dense set of aperiodic points and a dense countable set of periodic points. The ordering is not shown, but on this diagram there is practically just one order. It is not hard to check that the Vershik map can be prolonged to a homeomorphism of the whole path space $X_B$ and that the set $X_{\max}$ has empty interior. Hence, by Proposition~\ref{decis_krit}, the Bratteli diagram is decisive. After the prolongation, there is one fixed point and, for each $k\ge 1$, infinitely countably many periodic points of period $2^k$. The diagram models a Cantor dynamical system with uncountably infinitely many closed minimal subsystems, each of the subsystems is either a periodic orbit or a dyadic odometer (a path represents a periodic point if, from some place downward, it passes only via the ``single'' edges, otherwise it represents an element of an odometer).


\begin{figure}[ht]
\unitlength = 0.4cm
\begin{center}
\begin{graph}(28,14)
\graphnodesize{0.4}
\roundnode{V0}(14,13)
\roundnode{V11}(6,9)
\roundnode{V12}(22,9)

\edge{V11}{V0}
\bow{V12}{V0}{0.06}
\bow{V12}{V0}{-0.06}

\roundnode{V21}(2,4.5)
\roundnode{V22}(10,4.5)
\roundnode{V23}(18,4.5)
\roundnode{V24}(26,4.5)

\edge{V21}{V11}
\bow{V22}{V11}{-0.06}
\bow{V22}{V11}{0.06}
\edge{V23}{V12}
\bow{V24}{V12}{-0.06}
\bow{V24}{V12}{0.06}

\roundnode{V31}(0,0.5)
\roundnode{V32}(4,0.5)
\roundnode{V33}(8,0.5)
\roundnode{V34}(12,0.5)
\roundnode{V35}(16,0.5)
\roundnode{V36}(20,0.5)
\roundnode{V37}(24,0.5)
\roundnode{V38}(28,0.5)
\edge{V31}{V21}
\bow{V32}{V21}{-0.06}
\bow{V32}{V21}{0.06}
\edge{V33}{V22}
\bow{V34}{V22}{0.06}
\bow{V34}{V22}{-0.06}
\edge{V35}{V23}
\bow{V36}{V23}{0.06}
\bow{V36}{V23}{-0.06}
\edge{V37}{V24}
\bow{V38}{V24}{0.06}
\bow{V38}{V24}{-0.06}

\freetext(0,-0.5){$\vdots$}
\freetext(4,-0.5){$\vdots$}
\freetext(8,-0.5){$\vdots$}
\freetext(12,-0.5){$\vdots$}
\freetext(16,-0.5){$\vdots$}
\freetext(20,-0.5){$\vdots$}
\freetext(24,-0.5){$\vdots$}
\freetext(28,-0.5){$\vdots$}

\end{graph}
\end{center}

\vspace{0.5 cm}
\end{figure}

\end{example}

The next example comes from the survey~\cite{downarowicz_survey}. It was given there as an example of a non-decisive ordered Bratteli diagram, but in fact this diagram is weakly decisive.
For the reader's convenience, we recall the example below.

\begin{example}\label{medynets}(A non-decisive, weakly decisive, ordered Bratteli diagram: the Vershik map can be prolonged in many different ways, however, all prolongations are conjugate. Additionally, the cofinal equivalence relation is aperiodic, so, by Theorem \ref{main}, the system obtained by any prolongation must admit another, decisive diagram. This also follows from weak decisiveness and Theorem \ref{ble}). Consider the following diagram.

\begin{figure}[ht]
\unitlength = 0.4cm
\begin{center}
\begin{graph}(30,15)
\graphnodesize{0.4}
\roundnode{V0}(15,14)
\freetext(15.5,14.6){$v_0$}
\roundnode{V11}(5,11)
\nodetext{V11}(-0.6,0.4){$u$}
\roundnode{V12}(15,11)
\nodetext{V12}(-0.6,0.4){$v$}
\roundnode{V13}(25,11)
\nodetext{V13}(0.6,0.4){$w$}

\edge{V11}{V0}
\edgetext{V11}{V0}{0}
\edge{V12}{V0}
\edgetext{V12}{V0}{0}
\edge{V13}{V0}
\edgetext{V13}{V0}{0}

\roundnode{V21}(3,6)
\roundnode{V22}(7,6)
\roundnode{V23}(15,6)
\roundnode{V24}(23,6)
\roundnode{V25}(27,6)


\bow{V23}{V12}{-0.06}
\bow{V23}{V12}{0.06}
\freetext(14.3,8.5){0}
\freetext(15.7,8.5){1}

\edge{V21}{V11}
\edgetext{V21}{V11}{0}
\edge{V22}{V11}
\edgetext{V22}{V11}{0}
\edge{V21}{V12}
\edgetext{V21}{V12}{1}
\edge{V22}{V12}
\edgetext{V22}{V12}{1}

\edge{V24}{V12}
\edgetext{V24}{V12}{0}
\edge{V24}{V13}
\edgetext{V24}{V13}{1}
\edge{V25}{V12}
\edgetext{V25}{V12}{0}
\edge{V25}{V13}
\edgetext{V25}{V13}{1}


\roundnode{V31}(2,1)
\roundnode{V32}(4,1)
\roundnode{V33}(6,1)
\roundnode{V34}(8,1)

\roundnode{V35}(15,1)

\roundnode{V36}(22,1)
\roundnode{V37}(24,1)
\roundnode{V38}(26,1)
\roundnode{V39}(28,1)

\freetext(2,0){$\vdots$}
\freetext(4,0){$\vdots$}
\freetext(6,0){$\vdots$}
\freetext(8,0){$\vdots$}
\freetext(15,0){$\vdots$}
\freetext(22,0){$\vdots$}
\freetext(24,0){$\vdots$}
\freetext(26,0){$\vdots$}
\freetext(28,0){$\vdots$}


\bow{V35}{V23}{-0.06}
\bow{V35}{V23}{0.06}
\freetext(14.3,3.5){0}
\freetext(15.7,3.5){1}

\edge{V31}{V21}
\edgetext{V31}{V21}{0}
\edge{V31}{V23}
\edgetext{V31}{V23}{1}

\edge{V32}{V21}
\edgetext{V32}{V21}{0}
\edge{V32}{V23}
\edgetext{V32}{V23}{1}

\edge{V33}{V22}
\edgetext{V33}{V22}{0}
\edge{V33}{V23}
\edgetext{V33}{V23}{1}

\edge{V34}{V22}
\edgetext{V34}{V22}{0}
\edge{V34}{V23}
\edgetext{V34}{V23}{1}

\edge{V36}{V24}
\edgetext{V36}{V24}{1}
\edge{V36}{V23}
\edgetext{V36}{V23}{0}

\edge{V37}{V24}
\edgetext{V37}{V24}{1}
\edge{V37}{V23}
\edgetext{V37}{V23}{0}

\edge{V38}{V25}
\edgetext{V38}{V25}{1}
\edge{V38}{V23}
\edgetext{V38}{V23}{0}

\edge{V39}{V25}
\edgetext{V39}{V25}{1}
\edge{V39}{V23}
\edgetext{V39}{V23}{0}

\end{graph}
\end{center}
\end{figure}

Any path in $X_B$ has infinitely many cofinal paths. Hence the Vershik map has infinite orbits and is aperiodic. Any path which passes through the vertex $u$ is minimal, while every path which passes through the vertex $w$ is maximal. These sets are clopen and homeomorphic to  the Cantor set. Additionally, there is one minimal and one maximal path, both passing through the vertex $v$ (they belong to the dyadic odometer represented by the central subdiagram and they do not belong to the interiors of $X_{\max}$ or $X_{\min}$). Thus, the sets of minimal and maximal paths are homeomorphic and have non-empty interiors, homeomorphic to the Cantor set. In order to be continuous, the prolongation of the Vershik map should map the maximal path passing through $v$ to the minimal path passing through $v$ and the remaining maximal paths (passing through $w$) to the minimal paths passing through $u$. Since the latter sets are both Cantor, there are multiple ways of homeomorphically mapping one to another, hence the diagram is not decisive. However, it is not hard to see that this diagram satisfies the conditions of weak decisiveness of Proposition~\ref{weak dec}; the paths passing through $w$, must be mapped to the paths passing through $u$. Then, after further $n$ steps all of them arrive to the central subdiagram at the level $\lfloor\log_2n\rfloor+1$ and from that place upward they agree with the $n$th image of the minimal path passing through $v$. So, the consecutive images of $X_{\max}$ are disjoint and their diameters shrink to zero.
\end{example}

\begin{example}\label{medynets1} (A non-weakly decisive diagram. Some prolongations are not conjugate. The cofinal relation is aperiodic, so by Theorem \ref{main}, any prolongation must admit another, decisive diagram.) This example resembles the previous one, except we ``double'' the central odometer subdiagram. The order of edges in the central diagrams is not shown because it is practically unique. As before, the interiors of $X_{\min}$ and of $X_{\max}$ consist of the paths passing through the vertices $u$ and $w$, respectively, are clopen in $X_B$ and homeomorphic to the Cantor set. There are also two more maximal and two more minimal paths which pass through the vertices $v_i$, for $i = 1,2$. Any continuous prolongation of the Vershik map must map the maximal path passing through the vertex $v_i$ to the minimal path passing through the same vertex $v_i$, for $i = 1,2$ but otherwise it can map $\mathsf{int} (X_{\max})$ onto $\mathsf{int} (X_{\min})$ by any homeomorphism between Cantor sets. This time, among these prolongations there are non-conjugate ones. For $i = 1,2$, let $X_{\min}^{(i)}$ be the set of all paths from $\mathsf{int} (X_{\min})$ whose forward orbits approach the odometer passing through the vertex $v_i$. Let $X_{\max}^{(i)}$ be the set of all paths from $\mathsf{int} (X_{\max})$ whose backward orbits approach the odometer passing through the vertex $v_i$. Let $h_1$ be a homeomorphism between $\inte(X_{\max})$ and $\inte(X_{\min})$ which sends $X_{\max}^{(i)}$ onto $X_{\min}^{(i)}$ and let $h_2$ send $X_{\max}^{(i)}$ to $X_{\min}^{(1 - i)}$. Let $T_i$ be the prolongation of the Vershik map with the help of the homeomorphism $h_i$. Then the system $(X_B, T_1)$ is a disjoint union of two closed subsystems, while $(X_B, T_2)$ does not split in this way. Thus the systems $(X_B, T_1)$ and $(X_B, T_2)$ are not conjugate.

\begin{figure}[ht]
\unitlength = 0.4cm
\begin{center}
\begin{graph}(30,15)
\graphnodesize{0.4}
\roundnode{V0}(15,14)
\freetext(15.5,14.6){$v_0$}
\roundnode{V11}(3,11)
\nodetext{V11}(-0.6,0.4){$u$}
\roundnode{V12}(13,11)
\nodetext{V12}(-0.6,0.4){$v_1$}
\roundnode{V122}(17,11)
\nodetext{V122}(0.6,0.4){$v_2$}
\roundnode{V13}(27,11)
\nodetext{V13}(0.6,0.4){$w$}

\edge{V11}{V0}
\edge{V12}{V0}
\edge{V122}{V0}
\edge{V13}{V0}

\roundnode{V21}(1,6)
\roundnode{V22}(5,6)
\roundnode{V23}(13,6)
\roundnode{V232}(17,6)
\roundnode{V24}(25,6)
\roundnode{V25}(29,6)


\bow{V23}{V12}{-0.06}
\bow{V23}{V12}{0.06}
\bow{V232}{V122}{-0.06}
\bow{V232}{V122}{0.06}

\edge{V21}{V11}
\edgetext{V21}{V11}{0}
\edge{V22}{V11}
\edgetext{V22}{V11}{0}
\edge{V21}{V12}
\edgetext{V21}{V12}{1}
\edge{V22}{V122}
\edgetext{V22}{V122}{1}

\edge{V24}{V12}
\edge{V24}{V13}
\edgetext{V24}{V13}{1}
\edge{V25}{V122}
\edgetext{V25}{V122}{0}
\edge{V25}{V13}
\edgetext{V25}{V13}{1}
\edgetext{V24}{V12}{0}


\roundnode{V31}(0,1)
\roundnode{V32}(2,1)
\roundnode{V33}(4,1)
\roundnode{V34}(6,1)

\roundnode{V35}(13,1)
\roundnode{V352}(17,1)

\roundnode{V36}(24,1)
\roundnode{V37}(26,1)
\roundnode{V38}(28,1)
\roundnode{V39}(30,1)

\freetext(0,0){$\vdots$}
\freetext(2,0){$\vdots$}
\freetext(4,0){$\vdots$}
\freetext(6,0){$\vdots$}
\freetext(13,0){$\vdots$}
\freetext(17,0){$\vdots$}
\freetext(24,0){$\vdots$}
\freetext(26,0){$\vdots$}
\freetext(28,0){$\vdots$}
\freetext(30,0){$\vdots$}


\bow{V35}{V23}{-0.06}
\bow{V35}{V23}{0.06}
\bow{V352}{V232}{-0.06}
\bow{V352}{V232}{0.06}

\edge{V31}{V21}
\edgetext{V31}{V21}{0}
\edge{V31}{V23}
\edgetext{V31}{V23}{1}

\edge{V32}{V21}
\edgetext{V32}{V21}{0}
\edge{V32}{V23}
\edgetext{V32}{V23}{1}

\edge{V33}{V22}
\edgetext{V33}{V22}{0}
\edge{V33}{V232}
\edgetext{V33}{V232}{1}

\edge{V34}{V22}
\edgetext{V34}{V22}{0}
\edge{V34}{V232}
\edgetext{V34}{V232}{1}

\edge{V36}{V24}
\edgetext{V36}{V24}{1}
\edge{V36}{V23}
\edgetext{V36}{V23}{0}

\edge{V37}{V24}
\edgetext{V37}{V24}{1}
\edge{V37}{V23}
\edgetext{V37}{V23}{0}

\edge{V38}{V25}
\edgetext{V38}{V25}{1}
\edge{V38}{V232}
\edgetext{V38}{V232}{0}

\edge{V39}{V25}
\edgetext{V39}{V25}{1}
\edge{V39}{V232}
\edgetext{V39}{V232}{0}

\end{graph}
\end{center}
\end{figure}

\end{example}

\begin{example}
The diagram below gives an example of a non-Bratteli-Vershikizable system, such that the set of periodic has non-trivial boundary. The ordering is not shown since there is practically just one order. The set of fixed points is a Cantor set whose boundary is a one-point set (one of the fixed points lies in the closure of the set of aperiodic points). In the provided diagram, one can easily prolong the Vershik map to the interior of the set of periodic points in many different ways. But we know much more than that. Since the interior of the set of fixed points is non-empty, by Theorem~\ref{main}, the system is not Bratteli-Vershikizable, so that any BV-model (not only the one provided) is not decisive. Moreover, by Theorem \ref{ble}, the Vershik map on any BV-model can be prolonged in many mutually non-conjugate ways (this cannot be easily deduced by just looking at the provided diagram).

\begin{figure}[ht]
\unitlength = 0.4cm
\begin{center}
\begin{graph}(30,15)
\graphnodesize{0.4}
\roundnode{V0}(15,14)
\roundnode{V11}(10,11)
\roundnode{V12}(15,11)
\roundnode{V13}(20,11)

\edge{V11}{V0}
\edge{V12}{V0}
\bow{V13}{V0}{-0.06}
\bow{V13}{V0}{0.06}

\roundnode{V21}(5,7)
\roundnode{V22}(8,7)
\roundnode{V23}(12,7)
\roundnode{V24}(15,7)
\roundnode{V25}(20,7)
\roundnode{V26}(25,7)

\edge{V12}{V23}
\edge{V11}{V22}
\edge{V11}{V21}
\edge{V12}{V24}
\bow{V25}{V12}{-0.06}
\bow{V25}{V12}{0.06}
\bow{V26}{V13}{-0.06}
\bow{V26}{V13}{0.06}


\roundnode{V31}(1,3)
\roundnode{V32}(3,3)
\roundnode{V33}(5,3)
\roundnode{V34}(7,3)

\roundnode{V35}(9,3)
\roundnode{V36}(11,3)
\roundnode{V37}(13,3)

\roundnode{V38}(15,3)
\roundnode{V39}(20,3)
\roundnode{V310}(25,3)
\roundnode{V311}(30,3)


\edge{V21}{V31}
\edge{V21}{V32}
\edge{V22}{V33}
\edge{V22}{V34}
\edge{V23}{V35}
\edge{V37}{V24}
\edge{V36}{V23}
\edge{V24}{V38}

\bow{V24}{V39}{-0.06}
\bow{V24}{V39}{0.06}

\bow{V25}{V310}{-0.06}
\bow{V25}{V310}{0.06}

\bow{V26}{V311}{-0.06}
\bow{V26}{V311}{0.06}

\freetext(1,2){$\vdots$}
\freetext(3,2){$\vdots$}
\freetext(5,2){$\vdots$}
\freetext(7,2){$\vdots$}
\freetext(9,2){$\vdots$}
\freetext(11,2){$\vdots$}
\freetext(13,2){$\vdots$}
\freetext(15,2){$\vdots$}
\freetext(20,2){$\vdots$}
\freetext(25,2){$\vdots$}
\freetext(30,2){$\vdots$}

\end{graph}
\end{center}
\end{figure}

\end{example}

\section{A decisive Bratteli-Vershik model for the full shift}\label{fullshift}
In this section we show how to build an ordered decisive diagram for the full shift $\Omega$ on two symbols $\{0,1\}$. To follow the proof of Theorem~\ref{main}, we consider the array representation which, for every point $x \in \Omega$, consists of $x$ repeated in every row. In this array representation we will put markers satisfying the assertions of Lemma~\ref{marker_lemma}. 

We endow the set of all possible blocks of a given length $k$ with the lexicographical order: we say that a block $a$ \emph{dominates} a block $b$ if either $a=b$ or $a=c1x$ and $b=c0y$ for some blocks $c,x,y$ of appropriate lengths (possibly $0$). We put a marker at the horizontal position $n$ (in row $k$ of $x$) if (and only if) there exists $i \in [n-k+1,n]$ such that the block $x[n,n+k)$ dominates the blocks $x[i,i+k),x[i+1,i+k+1),\dots,x[i+k-1,i+2k-1)$ (roughly speaking, the block starting at $n$ must dominate all blocks starting in \emph{some} window of length $k$, containing $n$). Obviously, such a procedure of adding markers is continuous and shift-equivariant. 

Now we prove that the assertions of Lemma~\ref{marker_lemma} are satisfied. Note that if a block $a[0,k]$ dominates a block $b[0,k]$ then the subblock $a[0,k)$ dominates the subblock $b[0,k)$. This easily implies that the markers in row $k+1$ appear only at horizontal positions of the markers in row 
$k$ as required in condition (1) of Lemma~\ref{marker_lemma}.

Further, since among the blocks (of length $k$) which start in a window of length $k$ there is a dominating one, and it generates a marker, it is clear that markers in row $k$ appear at most $k$ positions apart as required in condition (2) of Lemma~\ref{marker_lemma}.

As we have already noted at the beginning of the proof of Lemma \ref{marker_lemma}, to prove condition (3) it suffices to show that the set of points which, for some $k$, do not have a marker at the coordinate $0$ in row $k$, is dense in $\Omega$. Consider points of the following form $x=\dots1111B01111\dots$, where $B$ is any finite block of an odd length $2l-1$ centered at the coordinate $0$. Clearly, such points form a set dense in $\Omega$. It is easy to see that if $k$ is much larger than $l$ then the block $x[0,k)$ (which has a zero already at the $l$th position) dominates neither of the following two blocks: $x[l+1,l+1+k)$ (which consists only of $1$'s) and $x[-k+l+1,l+1)$ (which starts with more than $l$ symbols $1$). Since in any window of length $k$, containing $n=0$, at least one of these two blocks starts, the algorithm does not produce a marker at the coordinate $0$.

We will now draw the first three levels of the diagram which corresponds to this markered array representation. We begin by listing all possible $k$-trapezoids for $k=1,2,3$. We will make two modifications. First, we will put the markers (vertical bars) to the left of the symbols rather than to the right, this will allow us to see the dominating block (which generates the marker) to the right of the vertical bar (of course, in most cases, this block is cut by the following marker, but in some cases we can see it completely between two markers). Second, we define the $k$-trapezoids slightly differently: we select a subsequence of rows, say $(k_i)$ and we widen the $k$-rectangles by adding $k_i$-rectangles on either sides only for $k_i<k$ (subsequently with decreasing $i$). In our particular example, since we draw only the levels $0$ through $3$, we have assumed that $k_1=2$ and $k_2>3$, so that each $2$-trapezoid consists of a $2$-rectangle widened in row $1$ by one $1$-rectangle on either side (and $1$-rectangles have length $1$), but each $3$-trapezoid consists of a $3$-rectangle widened still only in row $1$, by one symbol on either side. This reduces the cardinality of $3$-trapezoids from more than $50$ (if the trapezoids were defined as in the main proof) to $15$, a number reasonable to be drawn.

Since there are markers at all horizontal positions in row 1, the set $V_1$ consists of two vertices: 

\[
\begin{array}{c}
\vline\,0\,\vline
\end{array}
\begin{array}{c}
\vline\,1\,\vline
\end{array}
\]

\medskip

The set $V_2$ consists of 11 vertices, which correspond to the following trapezoids:

\begin{figure}[ht]

\[
\begin{array}{c}
0\,\vline\,0\,\vline\,0\\
\vline\,0\,\vline
\end{array}
\begin{array}{c}
0\,\vline\,0\,\vline\,1\\
\vline\,0\,\vline
\end{array}
\begin{array}{c}
1\,\vline\,0\,\vline\,0\\
\vline\,0\,\vline
\end{array}
\begin{array}{c}
0\,\vline\,1\,\vline\,0\,\vline\,1\\
\vline\,1\phantom{\,\vline\,}0\,\vline
\end{array}
\begin{array}{c}
0\,\vline\,1\,\vline\,0\,\vline\,0\\
\vline\,1\phantom{\,\vline\,}0\,\vline
\end{array}
\begin{array}{c}
1\,\vline\,1\,\vline\,0\,\vline\,0\\
\vline\,1\phantom{\,\vline\,}0\,\vline
\end{array}
\begin{array}{c}
1\,\vline\,1\,\vline\,0\,\vline\,1\\
\vline\,1\phantom{\,\vline\,}0\,\vline
\end{array}
\begin{array}{c}
0\,\vline\,1\,\vline\,0\\
\vline\,1\,\vline
\end{array}
\begin{array}{c}
0\,\vline\,1\,\vline\,1\\
\vline\,1\,\vline
\end{array}
\begin{array}{c}
1\,\vline\,1\,\vline\,0\\
\vline\,1\,\vline
\end{array}
\begin{array}{c}
1\,\vline\,1\,\vline\,1\\
\vline\,1\,\vline
\end{array}
\]

\end{figure}

The set $V_3$ consists of 15 vertices, corresponding to the following trapezoids:

\begin{figure}[ht]

\[
\begin{array}{c}
0\,\vline\,0\,\vline\,0\\
\vline\,0\,\vline\\
\vline\,0\,\vline
\end{array}
\begin{array}{c}
0\,\vline\,0\,\vline\,1\\
\vline\,0\,\vline\\
\vline\,0\,\vline
\end{array}
\begin{array}{c}
1\,\vline\,0\,\vline\,0\\
\vline\,0\,\vline\\
\vline\,0\,\vline
\end{array}
\begin{array}{c}
0\,\vline\,1\,\vline\,0\,\vline\,1\\
\vline\,1\phantom{\,\vline\,}0\,\vline\\
\vline\,1\phantom{\,\vline\,}0\,\vline
\end{array}
\begin{array}{c}
0\,\vline\,1\,\vline\,0\,\vline\,0\,\vline\,0\\
\vline\,1\,\vline\,0\,\vline\,0\,\vline\\
\vline\,1\phantom{\,\vline\,}0\phantom{\,\vline\,}0\,\vline
\end{array}
\begin{array}{c}
0\,\vline\,1\,\vline\,0\,\vline\,0\,\vline\,1\\
\vline\,1\phantom{\,\vline\,}0\,\vline\,0\,\vline\\
\vline\,1\phantom{\,\vline\,}0\phantom{\,\vline\,}0\,\vline
\end{array}
\begin{array}{c}
1\,\vline\,1\,\vline\,0\,\vline\,0\,\vline\,0\\
\vline\,1\,\vline\,0\,\vline\,0\,\vline\\
\vline\,1\phantom{\,\vline\,}0\phantom{\,\vline\,}0\,\vline
\end{array}
\]

\end{figure}

\begin{figure}[ht]

\[
\begin{array}{c}
1\,\vline\,1\,\vline\,0\,\vline\,0\,\vline\,1\\
\vline\,1\phantom{\,\vline\,}0\,\vline\,0\,\vline\\
\vline\,1\phantom{\,\vline\,}0\phantom{\,\vline\,}0\,\vline
\end{array}
\begin{array}{c}
0\,\vline\,1\,\vline\,1\,\vline\,0\,\vline\,1\\
\vline\,1\,\vline\,1\phantom{\,\vline\,}0\,\vline\\
\vline\,1\phantom{\,\vline\,}1\phantom{\,\vline\,}0\,\vline
\end{array}
\begin{array}{c}
1\,\vline\,1\,\vline\,1\,\vline\,0\,\vline\,1\\
\vline\,1\,\vline\,1\phantom{\,\vline\,}0\,\vline\\
\vline\,1\phantom{\,\vline\,}1\phantom{\,\vline\,}0\,\vline
\end{array}
\begin{array}{c}
1\,\vline\,1\,\vline\,0\,\vline\,1\\
\vline\,1\phantom{\,\vline\,}0\,\vline\\
\vline\,1\phantom{\,\vline\,}0\,\vline
\end{array}
\begin{array}{c}
0\,\vline\,1\,\vline\,0\\
\vline\,1\,\vline\\
\vline\,1\,\vline
\end{array}
\begin{array}{c}
0\,\vline\,1\,\vline\,1\\
\vline\,1\,\vline\\
\vline\,1\,\vline
\end{array}
\begin{array}{c}
1\,\vline\,1\,\vline\,0\\
\vline\,1\,\vline\\
\vline\,1\,\vline
\end{array}
\begin{array}{c}
1\,\vline\,1\,\vline\,1\\
\vline\,1\,\vline\\
\vline\,1\,\vline
\end{array}
\]

\end{figure}

The first three levels of the diagram are shown on the figure below (the edges with a common source are always ordered from right to left).

\begin{figure}[ht]
\unitlength = 0.4cm
\begin{center}
\begin{graph}(28,14)
\graphnodesize{0.4}
\roundnode{V0}(15,13)
\roundnode{V11}(10,9)
\roundnode{V12}(20,9)

\roundnode{V21}(3.5,5)
\roundnode{V22}(5.5,5)
\roundnode{V23}(7.5,5)
\roundnode{V24}(9.5,5)
\roundnode{V25}(12.5,5)
\roundnode{V26}(15.5,5)
\roundnode{V27}(18.5,5)
\roundnode{V28}(20.5,5)
\roundnode{V29}(22.5,5)
\roundnode{V210}(24.5,5)
\roundnode{V211}(26.5,5)

\roundnode{V31}(0,1)
\roundnode{V32}(2,1)
\roundnode{V33}(4,1)
\roundnode{V34}(6,1)
\roundnode{V35}(8,1)
\roundnode{V36}(10,1)
\roundnode{V37}(16,1)
\roundnode{V38}(12,1)
\roundnode{V39}(14,1)
\roundnode{V310}(18,1)
\roundnode{V311}(20,1)
\roundnode{V312}(22,1)
\roundnode{V313}(24,1)
\roundnode{V314}(26,1)
\roundnode{V315}(28,1)

\freetext(0,0){$\vdots$}
\freetext(2,0){$\vdots$}
\freetext(4,0){$\vdots$}
\freetext(6,0){$\vdots$}
\freetext(8,0){$\vdots$}
\freetext(10,0){$\vdots$}
\freetext(12,0){$\vdots$}
\freetext(14,0){$\vdots$}
\freetext(16,0){$\vdots$}
\freetext(18,0){$\vdots$}
\freetext(20,0){$\vdots$}
\freetext(22,0){$\vdots$}
\freetext(24,0){$\vdots$}
\freetext(26,0){$\vdots$}
\freetext(28,0){$\vdots$}

\edge{V11}{V0}
\edge{V12}{V0}

\edge{V21}{V11}
\edge{V22}{V11}
\edge{V23}{V11}
\edge{V24}{V11}
\edge{V25}{V11}
\edge{V26}{V11}
\edge{V27}{V11}
\edge{V24}{V12}
\edge{V25}{V12}
\edge{V26}{V12}
\edge{V27}{V12}
\edge{V28}{V12}
\edge{V29}{V12}
\edge{V210}{V12}
\edge{V211}{V12}

\edge{V31}{V21}
\edge{V32}{V22}
\edge{V33}{V23}
\edge{V34}{V24}
\edge{V35}{V28}
\edge{V35}{V23}
\edge{V35}{V21}
\edge{V36}{V25}
\edge{V36}{V22}
\edge{V37}{V29}
\edge{V37}{V27}
\edge{V38}{V210}
\edge{V38}{V23}
\edge{V38}{V21}
\edge{V39}{V26}
\edge{V39}{V22}
\edge{V310}{V211}
\edge{V310}{V27}
\edge{V311}{V27}
\edge{V312}{V28}
\edge{V313}{V29}
\edge{V314}{V210}
\edge{V315}{V211}

\end{graph}
\end{center}
\end{figure}
As we can see, the diagram is fairly irregular and does not show any symmetries (no permutation of vertices yields a symmetric diagram). We do not know whether a different assignment of markers produces a nicer diagram. The one we propose seems to be the most natural.  

\textbf{Acknowledgement.}
The research of both authors is supported by the NCN (National Science Center, Poland) Grant 2013/08/A/ST1/00275

\end{document}